% The Cai-Galloway splitting theorem for smooth metric measure spaces

\documentclass{amsart}
\usepackage{amsmath}
\usepackage{amssymb}
\usepackage{amsthm}

\DeclareMathOperator{\dvol}{dvol}

\DeclareMathOperator{\Ric}{Ric}

\DeclareMathOperator{\sn}{sn}
\DeclareMathOperator{\cn}{cn}

\newcommand{\og}{\overline{g}}

\newcommand{\ov}{\overline{v}}

\newcommand{\onabla}{\overline{\nabla}}

\newcommand{\lp}{\langle}
\newcommand{\rp}{\rangle}
\newcommand{\lv}{\lvert}
\newcommand{\rv}{\rvert}
\newcommand{\lV}{\lVert}
\newcommand{\rV}{\rVert}

%\newcommand{\semiplus}{+}

%These definitions are so that I can change terminology at a whim

% Various math letters I use a lot

\newcommand{\bN}{\mathbb{N}}

\newcommand{\bR}{\mathbb{R}}

% Slash operators

\newcommand{\comment}[1]{}

\newtheorem{thm}{Theorem}[section]
\newtheorem{prop}[thm]{Proposition}
\newtheorem{lem}[thm]{Lemma}

\theoremstyle{definition}
\newtheorem{defn}[thm]{Definition}

\newtheorem{example}[thm]{Example}

\theoremstyle{remark}

\numberwithin{equation}{section}
\usepackage{esint}

\begin{document}

\title[Cai--Galloway splitting theorem for SMMS]{A generalization of the Cai--Galloway splitting theorem to smooth metric measure spaces}
\author{Jeffrey S. Case}
\thanks{JSC was partially supported by NSF-DMS Grant No.\ 1004394}
\address{Department of Mathematics \\ 1004 Fine Hall\\ Princeton University \\ Princeton, NJ 08544}
\email{jscase@math.princeton.edu}
\author{Peng Wu}
\address{Department of Mathematics \\ 583 Malott Hall \\ Cornell University \\ Ithaca, NY 14853}
\email{wupenguin@math.cornell.edu}
% \date{}
\keywords{smooth metric measure space; quasi-Einstein; splitting theorem; conformaly compact}
\subjclass[2000]{Primary 53C21; Secondary 53C24}
\begin{abstract}
We generalize the splitting theorem of Cai--Galloway for complete Riemannian manifolds with $\Ric\geq-(n-1)$ admitting a family of compact hypersurfaces tending to infinity with mean curvatures tending to $n-1$ sufficiently fast to the setting of smooth metric measure spaces.  This result complements and provides a new perspective on the splitting theorems recently proven by Munteanu--Wang and Su--Zhang.  We show that the mean curvature assumption in our result is sharp, which also provides an example showing that the assumption $R\geq-(n-1)$ in the Munteanu--Wang splitting theorem for expanding gradient Ricci solitons cannot be relaxed to $R>-n$.  We also use our result to study a certain class of conformally compact quasi-Einstein metrics, giving, as generalizations of respective results of Cai--Galloway and Lee, necessary conditions for the boundary to be connected and for the bottom of the spectrum of the weighted Laplacian to be maximal.
\end{abstract}
\maketitle

%%%%%%%%%%%%%%%%%%%%%%%%%%%%%%%%%%%%%%%%%%%%%%%%%%%%%%%%%%%%%%%%%
%                                                               %
% Structure of the document                                     %
%                                                               %
% 1. Intro                                                      %
% *. Acknowledgments                                            %
% 2. Smooth metric measure spaces                               %
% 3. Sharpness                                                  %
% 4. The splitting theorem                                      %
%                                                               %
%%%%%%%%%%%%%%%%%%%%%%%%%%%%%%%%%%%%%%%%%%%%%%%%%%%%%%%%%%%%%%%%%

\section{Introduction}
\label{sec:intro}

Smooth metric measure spaces are four-tuples $(M^n,g,e^{-\phi}\dvol,m)$ of a Riemannian manifold together with a choice of smooth measure $e^{-\phi}\dvol$ --- that is, $\phi\in C^\infty(M)$ and $\dvol$ is the usual Riemannian volume element determined by $g$ --- and dimensional parameter $m\in[0,\infty]$.  These spaces admit a natural generalization of the Ricci curvature known as the ($m$-)Bakry-\'Emery Ricci tensor, namely the tensor
\[ \Ric_\phi^m := \Ric + \nabla^2\phi - \frac{1}{m}d\phi\otimes d\phi . \]
Smooth metric measure spaces and the Bakry-\'Emery Ricci curvature play prominent roles in the study of the Ricci flow and in optimal transport, and for this reason have begun to attract lots of attention.  In particular, there has been much effort expended on generalizing results from comparison geometry to smooth metric measure spaces with Bakry-\'Emery Ricci curvature bounded below.

An important result in the AdS/CFT correspondence is that Poincar\'e-Einstein manifolds with conformal infinity of nonnegative Yamabe type are connected at infinity~\cite{CaiGalloway1999,WittenYau1999}.  Underlying this fact is the following splitting theorem obtained by Cai and Galloway~\cite{CaiGalloway1999}.

\begin{thm}[Cai--Galloway~\cite{CaiGalloway1999}]
\label{thm:cg}
(1) Let $(M^n,g)$ be a complete Riemannian manifold with compact boundary $\Sigma=\partial M$ such that $\Ric\geq-(n-1)g$ and $H>(n-1)$ for $H$ the mean curvature of $\Sigma$ with respect to the outward pointing normal.  Then $M$ is compact.

(2) Let $(M^n,g)$ be a complete Riemannian manifold with $\Ric\geq-(n-1)g$ and suppose there exists a sequence $\{\Sigma_k\}$ of compact hypersurfaces such that, for any fixed base point $o\in M$,
\begin{enumerate}
\item each $\Sigma_k$ separates $M$ into two disjoint sets,
\item $d(o,\Sigma_k)\to\infty$ as $k\to\infty$, and
\item the constants
\[ h_k := \inf_{x\in\Sigma_k} \left\{ H_{\Sigma_k}(x), n \right\} \]
satisfy
\begin{equation}
\label{eqn:cg_mean_curvature}
\lim_{k\to\infty} \left(n-h_k\right)e^{2d(o,\Sigma_k)} = 0 .
\end{equation}
\end{enumerate}
Then either $M$ has one end or $M$ is isometric to
\[ \left( \bR\times\Sigma, dt^2\oplus e^{2t}h \right) \]
for some compact Riemannian manifold $(\Sigma^{n-1},h)$ with nonnegative Ricci curvature.
\end{thm}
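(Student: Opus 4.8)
The engine for both parts is the Riccati (equivalently, Laplacian) comparison for distance functions against the space form of curvature $-1$, where the level sets of a distance function have mean curvature $(n-1)\coth$ of the radius, degenerating to $n-1$ at infinity.

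\emph{Part (1).} The plan is to run the focal-point comparison inward from $\Sigma$. Writing $\rho = d(\cdot,\Sigma)$ and following a unit-speed geodesic $\gamma$ meeting $\Sigma$ orthogonally, the quantity $u = \Delta\rho\circ\gamma$ obeys $u' \le -\tfrac{1}{n-1}u^2 - \Ric(\gamma',\gamma') \le -\tfrac{1}{n-1}u^2 + (n-1)$ with initial value the mean curvature of $\Sigma$ toward $\gamma'(0)$, namely $u(0) = -H(\gamma(0)) \le -\inf_\Sigma H < -(n-1)$. Comparing with the solution $v(t) = -(n-1)\coth(a-t)$ of the limiting ODE, where $\coth a = \inf_\Sigma H/(n-1) \in (1,\infty)$, I get $u \le v$ and hence $u \to -\infty$ before parameter $a$; so $\gamma$ meets a focal point of $\Sigma$ by time $a$ and cannot minimize past it. Since every point of $M$ is joined to $\Sigma$ by a minimizing, hence focal-point-free, orthogonal geodesic, $M = \{\rho \le a\}$, which is compact because $\Sigma$ is and $a$ is finite.

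\emph{Part (2).} Assume $M$ has at least two ends. For $k$ large $\Sigma_k$ is compact with $d(o,\Sigma_k)\to\infty$, so $M\setminus\Sigma_k$ splits into the component $A_k\ni o$ and a complementary region $B_k$, with, after relabeling, an end on each side. The plan is to construct, in the limit $k\to\infty$, a Busemann-type function $b\colon M\to\bR$ with $|\nabla b|=1$ and $\nabla^2 b = g - db\otimes db$; the gradient flow of such a $b$ then identifies $(M,g)$ isometrically with $(\bR\times\Sigma, dt^2\oplus e^{2t}h)$ for $\Sigma$ a level set of $b$ and $h$ its induced metric, and the bound $\Ric\ge -(n-1)g$ becomes exactly $\Ric_h\ge 0$. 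To build $b$, I would normalize $\beta_k := d(\cdot,\Sigma_k) - d(o,\Sigma_k)$, so that $\beta_k(o)=0$ and each $\beta_k$ is $1$-Lipschitz, feed the mean-curvature hypothesis into the Riccati comparison to get $\Delta\beta_k \le (n-1)\coth(\beta_k + d(o,\Sigma_k) + a_k)$ on the appropriate side with $a_k\to\tfrac12\log(2n-1)$, and play the two ends against each other, \`a la Cheeger--Gromoll, to obtain a matching lower bound; then any locally uniform sublimit $b = \lim\beta_k$ satisfies $\Delta b = n-1$ in the barrier sense, whence $\Delta b = n-1$ honestly and $\nabla^2 b = g - db\otimes db$ by the rigidity (equality case) of the comparison theorem.

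The crux, and the step I expect to fight hardest with, is that this limiting function genuinely inherits \emph{equality} in the comparison rather than a one-sided bound. The hyperbolic Riccati flow is exponentially unstable about its critical solution: a perturbation of size $\varepsilon$ of the initial mean curvature near $\Sigma_k$ grows like $\varepsilon e^{2t}$ after distance $t$, so transporting the defect $n - h_k$ back across $d(o,\Sigma_k)$ to the region where $b$ lives produces an error of order $(n-h_k)e^{2d(o,\Sigma_k)}$, which is precisely the quantity the hypothesis forces to $0$. Making this rigorous will require careful handling of the cut locus of $\Sigma_k$ so that the Laplacian comparisons hold in the support sense everywhere, ruling out collapse so that the $\beta_k$ converge to a non-constant Busemann function, and finally the rigidity argument converting equality in the comparison into the warped-product splitting --- the analogue, with the $e^t$ model in place of the linear one, of the equality discussion in the Cheeger--Gromoll splitting theorem.
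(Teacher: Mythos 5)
Your proposal follows essentially the same route the paper takes in proving its weighted generalization (Theorem~\ref{thm:splitting} and Proposition~\ref{prop:strong_cg}), which is in turn the original Cai--Galloway argument: the Riccati/Bochner comparison drives the focal-point estimate in part (1), and in part (2) the Busemann function built from $\{\Sigma_k\}$ --- with the exponential instability of the Riccati flow accounting for the sharp $e^{2d}$-hypothesis exactly as you describe --- is matched against the Busemann function from the opposite end via a maximum-principle (Cheeger--Gromoll) argument, with the equality case of Bochner then forcing the warped-product splitting. The one device you gesture at but do not name is the ``generalized support sense'' used in the paper: a two-stage approximation in which one first bends the hypersurfaces $\Sigma_k$ slightly to produce smooth lower supports for each $\beta_k$ (at the cost of an $\varepsilon$ loss in the mean curvature) and then lets $k\to\infty$, which is precisely how the inequality $\Delta\beta\geq n-1$ is made rigorous in the presence of the cut locus.
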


This result is sharp, in the sense that the assumption~\eqref{eqn:cg_mean_curvature} cannot be removed.  Indeed, if one only assumes that the limit is finite, then there are additional complete Riemannian manifolds with multiple ends which meet the hypotheses of the theorem.  For details, see~\cite{CaiGalloway1999} or Section~\ref{sec:sharp}.

Shortly after Theorem~\ref{thm:cg} was proven, X.\ Wang~\cite{Wang2001} obtained a similar rigidity result for conformally compact manifolds by using the bottom of the spectrum of the Laplacian in place of the mean curvature of hypersurfaces, and then Li and J.\ Wang~\cite{LiWang2001} removed the conformal compactness assumption.  This work then led Li and Wang~\cite{LiWang2002} to reformulate Cheng's estimate~\cite{Cheng1975} as a rigidity result.

\begin{thm}[Li--Wang~\cite{LiWang2002}]
\label{thm:lw}
Let $(M^n,g)$ be a complete, noncompact Riemannian manifold with $\Ric\geq-(n-1)g$.  Then $\lambda_1(-\Delta)\leq\frac{(n-1)^2}{4}$.  Moreover, if equality holds, then either
\begin{enumerate}
\item $M$ has one end,
\item $(M^n,g)$ is isometric to $(\bR\times N^{n-1},dt^2\oplus e^{2t}h)$ for some compact Riemannian manifold $(N^{n-1},h)$ with nonnegative Ricci curvature, or
\item $n=3$ and $(M^3,g)$ is isometric to $(\bR\times N^2,dt^2\oplus\cosh^2t\,h)$ for some compact Riemannian manifold $(N^2,h)$ with scalar curvature $R_h\geq-2$.
\end{enumerate}
\end{thm}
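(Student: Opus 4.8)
The plan is to prove the two assertions separately. For the universal bound $\lambda_1(-\Delta)\le\frac{(n-1)^2}{4}$, fix $o\in M$, write $r=d(o,\cdot)$, and note that $\Ric\ge-(n-1)g$ gives, via Bishop--Gromov, $\vol(\partial B_s(o))\le Ce^{(n-1)s}$ for almost every large $s$, while noncompactness of $M$ makes these balls exhaust $M$. For $a>\frac{n-1}{2}$ and $R$ large, let $\varphi$ equal $e^{-ar}$ on $B_R(o)$ and interpolate linearly to $0$ on $B_{R+1}(o)\setminus B_R(o)$; then $|\nabla\varphi|^2=a^2\varphi^2$ on $B_R(o)$, the coarea formula and the volume bound bound $\int_{B_{R+1}(o)\setminus B_R(o)}|\nabla\varphi|^2$ by a multiple of $e^{(n-1-2a)R}$ (exponentially small since $2a>n-1$), and $\int_M\varphi^2$ increases to a finite positive limit. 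Hence the Rayleigh quotient of $\varphi$ tends to $a^2$, so $\lambda_1\le a^2$ for all such $a$, and $a\downarrow\frac{n-1}{2}$ gives the bound.

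For the rigidity statement, suppose $\lambda_1=\frac{(n-1)^2}{4}$ and $M$ has at least two ends. Since $\lambda_1>0$, $M$ is nonparabolic, so some end is nonparabolic, and the Li--Tam theory of harmonic functions on ends produces either, in the presence of two nonparabolic ends, a bounded harmonic $u$ with $0<u<1$, $u\to1$ on one end and $u\to0$ on the rest, and $\int_M|\nabla u|^2<\infty$; or, in the presence of one nonparabolic and one parabolic end, a positive finite-energy harmonic function on the nonparabolic end. The key preparatory step is to convert the spectral gap into the sharp decay
\[
\int_{M\setminus B_r(o)}|\nabla u|^2\ \le\ Ce^{-(n-1)r},
\]
the rate realized on the model $(\bR\times N,\,dt^2\oplus e^{2t}h_0)$, whose bounded harmonic functions are $c_1-c_2e^{-(n-1)t}$. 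One obtains this by a Caccioppoli/Agmon iteration: testing $\lambda_1\int(\eta u)^2\le\int|\nabla(\eta u)|^2$ against radial cut-offs $\eta=\eta(r)$ and using $\Delta u=0$ reduces it to $\int_{M\setminus B_r(o)}u^2\le Ce^{-(n-1)r}$, after which interior $L^2$ estimates give the gradient bound. In particular every integral over the complement of a compact set appearing below converges.

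Now set $w=|\nabla u|$. Bochner's formula, $\Ric\ge-(n-1)g$, and the refined Kato inequality $|\nabla^2u|^2\ge\frac{n}{n-1}|\nabla w|^2$ for harmonic functions give, where $w>0$,
\[
w\,\Delta w\ \ge\ \frac{1}{n-1}|\nabla w|^2-(n-1)w^2,
\]
so $v:=w^{(n-2)/(n-1)}$ is a nonnegative weak subsolution of $\Delta v+(n-2)v\ge0$ on $M$, and the decay estimate with H\"older's inequality gives $\int_{B_{r+1}(o)\setminus B_r(o)}v^2\lesssim e^{-(n-3)r}$. Here the identity $\frac{(n-1)^2}{4}=(n-2)+\frac{(n-3)^2}{4}$ enters. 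For $n\ge4$, $v\in L^2(M)$, so testing the subsolution inequality against $v$ forces $\lambda_1\le n-2<\frac{(n-1)^2}{4}$ unless $v\equiv0$; thus $w\equiv0$, no separating bounded harmonic function exists, $M$ has exactly one nonparabolic end, and one proceeds as in that case below. For $n=3$ the exponent $n-3$ vanishes and $\lambda_1=n-2$, so $v$ lies at the bottom of the spectrum; comparing $v$ with the positive ground state $\psi$ of $\Delta+(n-2)$, whose decay $\psi\sim e^{-(n-1)r/2}$ matches that of $v$, forces $v=c\psi$, whence every inequality above --- Bishop--Gromov, refined Kato, Bochner, and each Cauchy--Schwarz used in integrations by parts --- holds with equality identically on $M$. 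Equality in refined Kato makes $\nabla^2u$ a pure multiple of the metric on $(\nabla u)^\perp$, equality in Bochner gives $\Ric(\nabla u,\nabla u)=-(n-1)w^2$, and the level sets of $u$ with the unit-speed flow of $\nabla u/|\nabla u|$ then realize $M$ as a warped product $dt^2\oplus\varphi(t)^2h_0$ over a compact $(N^{n-1},h_0)$; the equality $\Ric(\partial_t,\partial_t)=-(n-1)$ forces $\varphi''=\varphi$, so $\varphi\propto\cosh t$ (the purely exponential solutions being incompatible with two nonparabolic ends), and $\Ric\ge-(n-1)g$ restricted to the fibers reads $\Ric_{h_0}\ge-(n-2)h_0$, i.e.\ $R_{h_0}\ge-2$ as $n=3$. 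In the one-nonparabolic-end case the same Bochner analysis applied to the positive finite-energy harmonic function, together with the finiteness of the parabolic end's volume, forces $\varphi''=\varphi$ with $\varphi'/\varphi\to1$ on the nonparabolic side and $\varphi\to0$ on the parabolic side, so $\varphi(t)=e^t$ after normalization and the fiber condition becomes $\Ric_{h_0}\ge0$. These are the alternatives (2) and (3), the remaining possibility being that $M$ has a single end, alternative (1).

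The difficulty concentrates in two places. First, the \emph{sharp} decay $e^{-(n-1)r}$ must be extracted from the bare equality $\lambda_1=\frac{(n-1)^2}{4}$: a crude Caccioppoli estimate yields only a suboptimal rate, and one must feed the geometry at infinity back through an Agmon-type iteration (or a Moser sup-estimate against the distance function) to reach the rate that places the $L^2$-threshold in the Bochner step exactly at $n=3$ --- a worse exponent would erase the distinction between alternatives (2) and (3). Second, and more seriously, the one-nonparabolic-end case provides no bounded harmonic function, so the Bochner argument must be rebuilt around a positive, unbounded, finite-energy harmonic function, and the splitting must be pried out of a delicate balance between its decay on the nonparabolic end and the volume growth of the parabolic end. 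Assembling these cases into the stated trichotomy --- and in particular checking that the $\cosh^2$-warped model saturates $\lambda_1=\frac{(n-1)^2}{4}$ exactly when $n=3$ --- is where the work lies.
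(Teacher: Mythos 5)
This statement (Theorem~\ref{thm:lw}) is quoted from Li--Wang and appears in the paper only as background; the paper itself gives no proof of it. So there is no ``paper's own proof'' to compare against.

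As a reconstruction of the Li--Wang argument, your sketch has the right skeleton: the Cheng-type test functions $e^{-ar}$ together with the Bishop--Gromov volume bound for the estimate $\lambda_1\le\frac{(n-1)^2}{4}$; Li--Tam end theory to supply the (bounded or finite-energy positive) harmonic function; the Bochner formula with the refined Kato inequality; the subsolution $v=w^{(n-2)/(n-1)}$ of $\Delta v+(n-2)v\ge0$; the arithmetic identity $\frac{(n-1)^2}{4}=(n-2)+\frac{(n-3)^2}{4}$; and the resulting dichotomy between $n\ge4$ (where $v\in L^2$ forces $v\equiv0$, eliminating two nonparabolic ends) and $n=3$ (where the decay exponent vanishes and a genuine $\cosh^2$-warped product survives). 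You also correctly flag the two genuinely hard points: extracting the sharp $e^{-(n-1)r}$ energy decay from the spectral gap, and handling the one-nonparabolic-end case with an unbounded positive harmonic function. A few spots are hand-waved in ways that would need real work: your pointwise claim $\vol(\partial B_s)\le Ce^{(n-1)s}$ is only available in integrated form (though the cutoff argument survives this); the passage ``$v$ lies at the bottom of the spectrum; comparing $v$ with the positive ground state $\psi$ forces $v=c\psi$'' is not justified as stated when $v\notin L^2$, which is exactly the borderline situation for $n=3$ --- Li--Wang need a more careful integral-comparison argument there, not a ground-state uniqueness statement; and the one-nonparabolic-end analysis is compressed to the point of being a list of conclusions rather than an argument. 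None of these is a wrong idea, but they are where the actual proof lives, as you acknowledge. In short: correct strategy, at the level of a detailed outline rather than a proof --- and, to repeat, not something the paper under review attempts to prove.
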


The key step in the proof of Theorem~\ref{thm:cg} is to show that the Busemann function $\beta$ associated to the sequence of hypersurfaces $\{\Sigma_k\}$ satisfies $\Delta_\phi\beta\geq1$ in a suitable sense.  This function can also be used to show that a Riemannian manifold as in Theorem~\ref{thm:cg} achieves equality in Cheng's estimate for the first eigenvalue of the Laplacian, and thus Theorem~\ref{thm:cg} can be seen as a special case of Theorem~\ref{thm:lw}.

Recently Munteanu and J.\ Wang~\cite{MunteanuWang2011,MunteanuWang2011b} generalized many aspects of the work of Li and Wang~\cite{LiWang2001,LiWang2002} to the setting of smooth metric measure spaces with $\infty$-Bakry-\'Emery Ricci curvature bounded below, including two generalizations of Theorem~\ref{thm:lw}.  At the same time, Su and Zhang~\cite{SuZhang2011} proved the analogue of Theorem~\ref{thm:lw} for smooth metric measure spaces with $m$-Bakry-\'Emery Ricci tensor bounded below for $m<\infty$.  More precisely, their respective works combine to establish the following result.

\begin{thm}[Munteanu--Wang~\cite{MunteanuWang2011}, Su--Zhang~\cite{SuZhang2011}]
\label{thm:weighted_cheng}
Let $(M^n,g,e^{-\phi}\dvol,m)$ be a complete, noncompact smooth metric measure space with $\Ric_\phi^m\geq-\frac{1}{m+n-1}g$.  If $m=\infty$, assume additionally that $\lv\nabla\phi\rv^2\leq 1$.  Then $\lambda_1(-\Delta_\phi)\leq\frac{1}{4}$.  Moreover, if equality holds, one of the following is true:
\begin{enumerate}
\item $M$ has one end.
\item $(M^n,g,e^{-\phi}\dvol,m)$ is isometric to
\[ \left( \bR\times N^{n-1}, dt^2\oplus e^{\frac{2t}{m+n-1}}h, e^{\frac{mt}{m+n-1}}\dvol, m \right) \]
for some compact Riemannian manifold $(N^{n-1},h)$ with nonnegative Ricci curvature.
\item $n=3$, $m=0$, and $(M^n,g,e^{-\phi}\dvol,m)$ is isometric to
\[ (\bR\times N^2,dt^2\oplus(2\cosh\frac{t}{2})^2\,h, \dvol, 0) \]
for some compact Riemannian manifold $(N^2,h)$ with scalar curvature $R_h\geq-2$.
\end{enumerate}
\end{thm}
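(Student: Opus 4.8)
The plan is to prove the estimate and the rigidity statement separately, in both cases handling $m<\infty$ and $m=\infty$ in parallel. \emph{For the estimate} $\lambda_1(-\Delta_\phi)\le\tfrac14$ the idea is the one behind Cheng's estimate: deduce an at most exponential bound on the weighted volume growth from the curvature hypothesis, then insert a decaying test function into the weighted Rayleigh quotient. The first step is to show that there is a constant $C$ with $\int_{B_r(p)}e^{-\phi}\dvol\le Ce^{r}$ for all $r\ge1$. When $m<\infty$ this is the volume comparison theorem for the $m$-Bakry-\'Emery Ricci tensor (Qian, Wei--Wylie): with $N=m+n$, the hypothesis $\Ric_\phi^m\ge-\tfrac1{m+n-1}g$ is precisely $\Ric_\phi^m\ge-(N-1)\kappa\,g$ for $\kappa=\tfrac1{(m+n-1)^2}$, so the weighted volume of geodesic balls is dominated by that of balls in the $N$-dimensional space form of curvature $-\kappa$, which grows like $e^{(N-1)\sqrt\kappa\,r}=e^{r}$. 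When $m=\infty$ the hypothesis reads $\Ric_\phi\ge0$, and the bound $\lv\nabla\phi\rv\le1$ enters precisely here, via the weighted volume comparison of Wei--Wylie: $\Ric_\phi\ge0$ together with $\lv\nabla\phi\rv\le1$ forces the weighted area of geodesic spheres about $p$ to grow no faster than $e^{r}$, up to a polynomial factor. With this in hand I would take $f=\eta\,e^{-\alpha\rho}$, where $\rho=d(p,\cdot)$, $\alpha>\tfrac12$, and $\eta$ a Lipschitz cutoff exhausting $M$; since $2\alpha>1$ the weighted integral $\int e^{-2\alpha\rho}e^{-\phi}\dvol$ converges, the gradient term contributes $\alpha^2\int f^2e^{-\phi}\dvol$ plus a cutoff error that vanishes as $\eta\uparrow1$, so the weighted Rayleigh quotient of $f$ tends to $\alpha^2$, and letting $\alpha\downarrow\tfrac12$ yields $\lambda_1(-\Delta_\phi)\le\tfrac14$.

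\emph{For the rigidity}, suppose $\lambda_1(-\Delta_\phi)=\tfrac14$ and $M$ has more than one end. Following the two-ends strategy of Li and Wang and its weighted refinement by Munteanu--Wang, the first step is to produce a nonconstant, bounded $\phi$-harmonic function $u$ adapted to the ends --- obtained by exhausting $M$ and solving Dirichlet problems with boundary values separating one end from the rest --- for which the equality $\lambda_1=\tfrac14$ supplies the decay estimates for $\lv\nabla u\rv$ dictated by that value (of order $e^{-\rho/2}$) and enough weighted integrability to justify the integrations by parts that follow. The analytic engine is the $m$-Bochner formula
\begin{equation*}
\tfrac12\Delta_\phi\lv\nabla u\rv^2=\lv\nabla^2u\rv^2+\tfrac1m\lp\nabla\phi,\nabla u\rp^2+\lp\nabla u,\nabla\Delta_\phi u\rp+\Ric_\phi^m(\nabla u,\nabla u)
\end{equation*}
combined with the refined Kato inequality $\lv\nabla^2u\rv^2+\tfrac1m\lp\nabla\phi,\nabla u\rp^2\ge\tfrac{m+n}{m+n-1}\lv\nabla\lv\nabla u\rv\rv^2$, which holds whenever $\Delta_\phi u=0$ (for $m=\infty$ it degenerates to the ordinary Kato inequality, and there the bound $\lv\nabla\phi\rv\le1$ together with its equality case takes over the role of the effective-dimension improvement). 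Writing $v=\lv\nabla u\rv$ and inserting $\Ric_\phi^m\ge-\tfrac1{m+n-1}g$ gives
\begin{equation*}
v\,\Delta_\phi v\ge\tfrac1{m+n-1}\lv\nabla v\rv^2-\tfrac1{m+n-1}v^2 ,
\end{equation*}
and testing this against a suitable power of $v$ and a cutoff, then using both the decay of $v$ and $\lambda_1=\tfrac14$, forces every inequality that was used to become an equality wherever $\nabla u\ne0$.

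The rigidity is then read off from these equalities in the usual way. Equality in refined Kato makes $\nabla^2u$ pure trace on $(\nabla u)^\perp$; equality in the relevant Cauchy--Schwarz step forces $v$ to depend only on $u$; and equality $\Ric_\phi^m(\nabla u,\nabla u)=-\tfrac1{m+n-1}v^2$ pins the curvature in the gradient direction. Reparametrizing $u$ by weighted arclength $t$, these say that the level sets of $t$ are totally umbilic with $\nabla^2t=\tfrac1{m+n-1}(g-dt\otimes dt)$ and that $\lp\nabla\phi,\nabla t\rp$ is constant, so $g=dt^2\oplus e^{2t/(m+n-1)}h$ and $e^{-\phi}\dvol=e^{mt/(m+n-1)}\dvol$ after absorbing constants; re-imposing the curvature hypothesis on the cross section then forces $\Ric_h\ge0$. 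The one loophole is when $n-1=2$ and the $\tfrac1m$ term is absent, i.e.\ $n=3$ and $m=0$: there the curvature constraint controls only the scalar curvature of the $2$-dimensional cross section, the umbilicity equation admits the extra solution $\psi(t)=2\cosh\tfrac t2$, and one lands on the exceptional model $(\bR\times N^2,dt^2\oplus(2\cosh\tfrac t2)^2h,\dvol,0)$ with $R_h\ge-2$.

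I expect the rigidity half to be the main obstacle, and within it two points in particular. First, the weighted potential theory needed to construct the $\phi$-harmonic function $u$ with the decay dictated by $\lambda_1=\tfrac14$ when $M$ has more than one end: one must use that $\lambda_1>0$ makes the relevant ends non-parabolic for $\Delta_\phi$, build the right barriers, and establish the decay estimate. Second, organizing the equality discussion so that the generic warped-product conclusion (2) and the exceptional conclusion (3), which arises only when $n=3$ and $m=0$, are cleanly separated; here it is essential to carry the $\tfrac1m$ term through every step, since it is exactly what excludes the $\cosh$ profile unless $m=0$.
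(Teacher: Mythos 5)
The paper does not prove Theorem~\ref{thm:weighted_cheng}; it quotes it as a result of Munteanu--Wang~\cite{MunteanuWang2011} ($m=\infty$) and Su--Zhang~\cite{SuZhang2011} ($m<\infty$), so there is no proof in the paper against which to compare. Relative to those references your outline follows the same route: weighted volume comparison plus a decaying test function to get $\lambda_1\le\tfrac14$, then a Li--Wang-style two-ends argument driven by the weighted Bochner formula and a refined Kato inequality for $\phi$-harmonic functions, with the equality case unwound to the warped product (or the $\cosh$ model when $n=3$, $m=0$). As you yourself flag, though, the hard parts are precisely what the outline defers: constructing the bounded nonconstant $\phi$-harmonic function $u$ adapted to the ends and extracting the $e^{-\rho/2}$ decay of $u$ and $\lv\nabla u\rv$ from $\lambda_1=\tfrac14$ via $\phi$-nonparabolicity of the ends; proving the refined Kato inequality $\lv\nabla^2 u\rv^2+\tfrac1m\lp\nabla\phi,\nabla u\rp^2\ge\tfrac{m+n}{m+n-1}\lv\nabla\lv\nabla u\rv\rv^2$ and handling its degeneration at $m=\infty$, where $\lv\nabla\phi\rv\le1$ has to take over in exactly the right place; and the cutoff/integration-by-parts argument that forces every inequality to saturate and cleanly separates conclusions (2) and (3). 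So the proposal is a correct roadmap to the cited proofs rather than a complete argument.
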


In the limiting case $m=\infty$, Munteanu and Wang~\cite{MunteanuWang2011} used this result to show that the only steady gradient Ricci solitons with more than one end are the cylinders
\[ \left( \bR\times N^{n-1}, dt^2\oplus h, e^t\dvol, \infty\right) \]
constructed by taking products with a compact Ricci flat manifold $(N^{n-1},h)$.  By refining their techniques, they were able to give in a later article~\cite{MunteanuWang2011b} a similar characterization of expanding gradient Ricci solitons with more than one end.  More precisely, we say that $(M^n,g)$ is an expanding gradient Ricci soliton if there exists a function $\phi\in C^\infty(M)$, called the soliton potential, such that $\Ric_\phi^\infty=-g$.  Munteanu and Wang~\cite{MunteanuWang2011b} then established the following result.

\begin{thm}
\label{thm:munteanu_wang2011b}
Let $(M^n,g)$ be an expanding gradient Ricci soliton with soliton potential $\phi$.  If the scalar curvature $R\geq-(n-1)$ and $M$ has more than one end, then there is a compact Einstein manifold $(N^{n-1},h)$ with $\Ric_h=-h$ such that $(M^n,g,e^{-\phi}\dvol)$ is isometric to
\[ \left( \bR\times N^{n-1}, dt^2\oplus h, e^{\frac{t^2}{2}}\dvol \right) . \]
\end{thm}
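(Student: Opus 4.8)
The plan is to reduce this rigidity statement for expanding gradient Ricci solitons to an application of the splitting theorem for smooth metric measure spaces that forms the core of this paper — namely the generalization of Theorem \ref{thm:cg}(2) to the weighted setting. Given an expanding gradient Ricci soliton $(M^n, g, e^{-\phi}\dvol)$ with $\Ric_\phi^\infty = -g$, we equip it with the dimensional parameter $m = \infty$ and view it as a complete smooth metric measure space. The normalization in the splitting theorem requires $\Ric_\phi^m \geq -\frac{1}{m+n-1}g$; in the limit $m \to \infty$ this is exactly $\Ric_\phi^\infty \geq -g$, which holds with equality. Thus the curvature hypothesis is automatically satisfied, and the task is to verify the existence of a sequence $\{\Sigma_k\}$ of compact hypersurfaces tending to infinity whose weighted mean curvatures $H_\phi$ approach the critical value fast enough for the weighted analogue of \eqref{eqn:cg_mean_curvature} to hold.

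\emph{Key steps, in order.} First, I would recall the standard structure theory of expanding solitons: after normalizing $\phi$, the soliton identity gives $R + \lv\nabla\phi\rv^2 = \phi - c$ (up to constants, from tracing $\Ric + \nabla^2\phi = -g$ and using the contracted second Bianchi identity), so $\phi$ is a proper exhaustion function whose level sets $\Sigma_k := \{\phi = k\}$ are compact hypersurfaces for large regular values $k$, and $d(o, \Sigma_k) \to \infty$. The hypothesis $M$ has more than one end is used to ensure that (after possibly passing to level sets of a Busemann-type function as in the proof of the main splitting theorem) one has a separating sequence. Second, I would compute the weighted mean curvature of $\Sigma_k$ with respect to the outward normal $\nu = \nabla\phi / \lv\nabla\phi\rv$: combining $\nabla^2\phi = -g - \Ric$ with the hypothesis $R \geq -(n-1)$, one estimates the weighted second fundamental form and shows $H_\phi \to$ the critical value. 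The scalar curvature lower bound $R \geq -(n-1)$ enters precisely here, controlling the $\Ric$ contribution; one also uses known estimates (Hamilton, Cao–Zhou type) on the growth of $\phi$, namely $\phi(x) \sim \tfrac14 d(o,x)^2$, to convert the rate of convergence of $H_\phi$ into the exponential-in-distance decay the splitting theorem demands. Third, having verified all hypotheses, apply the main splitting theorem of this paper to conclude $(M, g, e^{-\phi}\dvol, \infty)$ is isometric to the cylinder $(\bR \times N^{n-1}, dt^2 \oplus h, e^{mt/(m+n-1)}\dvol)$ in the $m=\infty$ scaling, i.e.\ $dt^2 \oplus h$ with measure $e^t\dvol$, where $(N,h)$ has $\Ric_h \geq 0$. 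Finally, substitute this product form back into the soliton equation $\Ric_\phi^\infty = -g$: on the cylinder one solves for $\phi$ as a function of $t$ alone, obtaining $\phi = \tfrac{t^2}{2} + (\text{linear})$, which after translation is $\tfrac{t^2}{2}$, and the $tt$-component of the soliton equation forces $\Ric_h = -h$, upgrading $N$ to a compact Einstein manifold with the stated sign.

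\emph{Main obstacle.} The delicate point is the second step: showing that the weighted mean curvature of the level sets $\{\phi = k\}$ converges to the critical constant \emph{at the exponential rate} $o(e^{-2 d(o,\Sigma_k)})$ required by the weighted version of \eqref{eqn:cg_mean_curvature}. The scalar bound $R \geq -(n-1)$ gives the right limiting value but only soft control on the rate; one must feed in the quadratic asymptotics of the potential $\phi$ and sharp decay estimates for $R$ along the soliton (the scalar curvature of an expander decays, and $R \to$ its infimum near infinity) to trade the linear growth of $\phi$ against the distance and extract genuine exponential decay — equivalently, to show the level sets of $\phi$ are asymptotic to the level sets of the Busemann function fast enough. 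Handling the possibility that $\nabla\phi$ vanishes somewhere (so that not every $k$ is a regular value) is a minor technical wrinkle resolved by Sard's theorem and the properness of $\phi$. A secondary subtlety, should the direct level-set argument not immediately yield a \emph{separating} sequence, is to instead run the argument with the hypersurfaces produced inside the proof of the main splitting theorem and verify the weighted mean curvature bound there; this is where the assumption that $M$ has more than one end is genuinely needed.
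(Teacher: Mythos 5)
This statement is Munteanu--Wang's theorem, quoted in the paper as background from~\cite{MunteanuWang2011b}; the paper does not prove it, and in fact it cannot be derived from Theorem~\ref{thm:splitting} in the way you propose. Your reduction rests on a misreading of the normalization. The hypothesis of Theorem~\ref{thm:splitting} is $\Ric_\phi^m \geq -\tfrac{1}{m+n-1}g$, and as $m\to\infty$ the right-hand side tends to $0$, not to $-g$; so the $m=\infty$ case of Theorem~\ref{thm:splitting} is a statement about $\Ric_\phi^\infty\geq 0$ (the steady regime), not about expanders with $\Ric_\phi^\infty=-g$. There is no rescaling that fixes this: scaling $g\mapsto cg$ leaves $\Ric_\phi^\infty$ unchanged as a $2$-tensor and leaves $\lv\nabla\phi\rv$ unbounded, and Lemma~\ref{lem:approximation} explicitly requires $\Ric_\phi^\infty\geq 0$ as input.

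Two further symptoms confirm the mismatch. First, Theorem~\ref{thm:splitting} at $m=\infty$ carries the extra hypothesis $\lv\nabla\phi\rv^2\leq 1$, which fails for any nontrivial expander because the potential grows quadratically ($\phi\sim \tfrac14 r^2$, so $\lv\nabla\phi\rv\sim \tfrac12 r$). Second, and decisively, the conclusions disagree: the $m\to\infty$ limit of~\eqref{eqn:splitting_conclusion} is the steady cylinder $(\bR\times N,\, dt^2\oplus h,\, e^t\dvol)$ with $(N,h)$ of nonnegative Ricci curvature, whereas Theorem~\ref{thm:munteanu_wang2011b} produces the expanding cylinder $(\bR\times N,\, dt^2\oplus h,\, e^{t^2/2}\dvol)$ with $\Ric_h=-h$. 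These are genuinely different rigid models, and no substitution "back into the soliton equation'' converts the first into the second. The actual logical relationship in the paper runs the other way: Example~\ref{ex:besse} (with $m=\infty$) is a two-ended expander with $\Ric_\phi^\infty=-g$ and $R\to-n$ at infinity, which is used to show that the hypothesis $R\geq-(n-1)$ in Theorem~\ref{thm:munteanu_wang2011b} cannot be relaxed to $R>-n$. Proving Theorem~\ref{thm:munteanu_wang2011b} itself requires the separate argument of~\cite{MunteanuWang2011b} (building on Theorem~\ref{thm:weighted_cheng}), not the splitting theorem of this paper.
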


It is known that the scalar curvature of any expanding gradient Ricci soliton satisfies $R\geq-n$, and the inequality is strict unless $\phi$ is constant~\cite{PigolaRimoldiSetti2011}.  Some assumption on the scalar curvature is necessary, as there are many Einstein metrics with negative scalar curvature and multiple ends, but it was left open in~\cite{MunteanuWang2011b} to what extent the assumption $R\geq-(n-1)$ in Theorem~\ref{thm:munteanu_wang2011b} is necessary.

In light of Theorem~\ref{thm:weighted_cheng}, it is natural to ask whether Theorem~\ref{thm:cg} can be generalized to the setting of smooth metric measure spaces, and if so, whether the assumption~\eqref{eqn:cg_mean_curvature} might shed some light on the necessity of the assumption $R\geq-(n-1)$ in Theorem~\ref{thm:munteanu_wang2011b}.  The first goal of this article is to show that this is indeed the case.  More precisely, we will prove the following generalization of Theorem~\ref{thm:cg}.

\begin{thm}
\label{thm:splitting}
Let $(M^n,g,e^{-\phi}\dvol,m)$ be a smooth metric measure space with $\Ric_\phi^m\geq -\frac{1}{m+n-1}g$; if $m=\infty$, assume additionally that $\lv\nabla\phi\rv^2\leq1$.  Suppose also that there exists a sequence $\{\Sigma_k\}$ of compact hypersurfaces such that, for any fixed base point $o\in M$,
\begin{enumerate}
\item each $\Sigma_k$ separates $M$ into two disjoint sets,
\item $d(o,\Sigma_k)\to\infty$ as $k\to\infty$, and
\item the quantity
\[ h_k = \min_{x\in\Sigma_k}\left\{ H_\phi(x), 1\right\}, \]
where $H_\phi$ is the weighted mean curvature of $\Sigma_k$, satisfies
\begin{equation}
\label{eqn:splitting_mean_curvature}
\lim_{k\to\infty} (1-h_k)e^{\frac{2d(o,\Sigma_k)}{m+n-1}} = 0 .
\end{equation}
\end{enumerate}
Then either $M$ has one end, or $M$ is isometric to the product
\begin{equation}
\label{eqn:splitting_conclusion}
\left( \bR\times N, dt^2\oplus e^{\frac{2t}{m+n-1}}h, e^{\frac{mt}{m+n-1}}\dvol, m \right)
\end{equation}
for some compact Riemannian manifold $(N^{n-1},h)$ with nonnegative Ricci curvature.
\end{thm}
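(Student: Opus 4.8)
The plan is to follow the strategy, indicated just after Theorem~\ref{thm:cg}, by which the Cai--Galloway theorem becomes a special case of the Li--Wang estimate: build from $\{\Sigma_k\}$ a Busemann-type function $\beta$, show $\Delta_\phi\beta\ge1$ in a weak sense, and use this to force equality in the weighted Cheng estimate, so that the rigidity part of Theorem~\ref{thm:weighted_cheng} applies. Throughout set $N:=m+n-1$. If $M$ has one end there is nothing to do, so assume $M$ has at least two ends; note $M$ is then noncompact, since $d(o,\Sigma_k)\to\infty$. The goal is to show $M$ is isometric to~\eqref{eqn:splitting_conclusion}.

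First I would construct $\beta$. Let $U_k\ni o$ and $W_k$ be the two sets into which $\Sigma_k$ separates $M$; since any curve from $o$ into $W_k$ crosses $\Sigma_k$, we have $d(o,W_k)\ge d(o,\Sigma_k)\to\infty$, so every point of $M$ eventually lies in $U_k$. Put $\beta_k:=d(o,\Sigma_k)-d(\cdot,\Sigma_k)$; this family is equi-$1$-Lipschitz with $\beta_k(o)=0$, so along a subsequence $\beta_k\to\beta$ locally uniformly, with $\beta$ being $1$-Lipschitz, $\beta(o)=0$, and $|\nabla\beta|\le1$ almost everywhere. (In the model~\eqref{eqn:splitting_conclusion}, with $\Sigma_k$ the slices $\{t=k\}$ and normals pointing away from $o$, this $\beta$ is the coordinate $t$, for which $\Delta_\phi t=1$ identically; this is the identity we must reproduce as an inequality.)

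The main step, which I expect to be the crux, is to prove $\Delta_\phi\beta\ge1$ in the support sense. Fix $p\in M$. For $k$ large let $q_k\in\Sigma_k$ realize $\ell_k:=d(p,\Sigma_k)=d(o,\Sigma_k)+O(1)$, and let $\gamma_k$ be a unit-speed minimizing geodesic from $q_k$ to $p$; it meets $\Sigma_k$ orthogonally, and, being minimizing, has no focal point of $\Sigma_k$ before $p$. Along $\gamma_k$ the function $y:=\Delta_\phi\bigl(d(\cdot,\Sigma_k)\bigr)$ is defined on $[0,\ell_k)$ with $y(0)=-H_\phi(q_k)\le-h_k$ (the sign because $\gamma_k'(0)$ is the normal pointing toward $o$). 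The weighted Laplacian comparison --- the Riccati inequality for $\Delta_\phi$ along $\gamma_k$ under $\Ric_\phi^m\ge-\tfrac1Ng$ --- gives $y'\le\tfrac1N(1-y^2)$ when $m<\infty$; when $m=\infty$ this degenerates and one instead invokes $|\nabla\phi|\le1$ to conclude that $y$ is nonincreasing as long as $y\le-1$. Integrating, $y(\ell_k)\le\overline y(\ell_k)=\tanh\bigl(\tfrac{\ell_k}{N}-\operatorname{arctanh} h_k\bigr)$ (respectively $y(\ell_k)\le-h_k$ when $m=\infty$), and a short computation shows $\overline y(\ell_k)\to-1$ if and only if $(1-h_k)e^{2d(o,\Sigma_k)/N}\to0$. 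This is exactly where hypothesis~\eqref{eqn:splitting_mean_curvature}, with its sharp exponential rate, enters, and it yields $\limsup_k\Delta_\phi\bigl(d(\cdot,\Sigma_k)\bigr)(p)\le-1$. A routine application of Calabi's trick (to accommodate a possible cut point of $\Sigma_k$ at $p$, as in~\cite{CaiGalloway1999}) then produces, for $k$ large, a smooth lower support function for $\beta_k$ at $p$ with weighted Laplacian $\ge1-\delta_k$, $\delta_k\to0$; passing to the limit gives a lower support function for $\beta$ at $p$ with weighted Laplacian $\ge1$, as required.

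To finish, I would argue as in the Li--Wang reformulation. Since $\beta$ is $1$-Lipschitz and $\Delta_\phi\beta\ge1$ distributionally, the function $u:=e^{-\beta/2}>0$ satisfies $-\Delta_\phi u=u\bigl(\tfrac12\Delta_\phi\beta-\tfrac14|\nabla\beta|^2\bigr)\ge\tfrac14u$ distributionally, so Barta's inequality gives $\lambda_1(-\Delta_\phi)\ge\tfrac14$; with the upper bound from Theorem~\ref{thm:weighted_cheng} this forces $\lambda_1(-\Delta_\phi)=\tfrac14$. The rigidity clause of that theorem then leaves three alternatives: $M$ has one end (excluded), $M$ is isometric to~\eqref{eqn:splitting_conclusion} (done), or $n=3$, $m=0$ and $M$ is the exceptional $\cosh$-model of case~(3). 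I would rule out the last by observing that this model admits no hypersurface family satisfying~\eqref{eqn:splitting_mean_curvature}: its mean-convex slices $\{t=\mathrm{const}\}$ reach weighted mean curvature $1$ only at the rate $1-H\sim2e^{-d}$ (here $N=2$), and a comparison argument shows no compact separating hypersurface at distance $\sim d$ does better (cf.\ Section~\ref{sec:sharp}). Hence $M$ is isometric to~\eqref{eqn:splitting_conclusion}. The genuinely technical points are the weighted Laplacian comparison in the limiting case $m=\infty$ and the precise matching of the Riccati comparison with the exponential weight in~\eqref{eqn:splitting_mean_curvature}.
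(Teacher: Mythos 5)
Your strategy --- build the Busemann function $\beta$ from $\{\Sigma_k\}$, establish $\Delta_\phi\beta\ge1$, and feed this into the rigidity of Theorem~\ref{thm:weighted_cheng} --- is a genuinely different route from the paper's, and in fact one the paper explicitly anticipates (``an idea from our proof can also be used to show that Theorem~\ref{thm:splitting} is nevertheless a consequence of Theorem~\ref{thm:weighted_cheng}''; cf.\ Proposition~\ref{prop:baby_lee}). Your derivation of $\Delta_\phi\beta\ge1$ in the support sense, via the Riccati comparison with $\bar y(s)=\tanh(s/N-\operatorname{arctanh}h_k)$ and the observation that hypothesis~\eqref{eqn:splitting_mean_curvature} is exactly what sends $\bar y(\ell_k)\to-1$, matches the paper's Case 1 computation step for step, and the Barta-inequality step pinning $\lambda_1(-\Delta_\phi)=\tfrac14$ is correct. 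What your route buys is that you can then read off the splitting from an already-proved rigidity theorem instead of carrying out the maximum-principle-plus-Bochner argument yourself.

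However, this shortcut leaves a genuine gap at the final step. After concluding $\lambda_1=\tfrac14$ and multiple ends, Theorem~\ref{thm:weighted_cheng} permits two outcomes, and you must exclude the exceptional $n=3$, $m=0$, $\cosh$-warped cylinder. Your sentence that ``a comparison argument shows no compact separating hypersurface at distance $\sim d$ does better'' is not a proof: your explicit computation only shows that the \emph{level slices} $\{t=\mathrm{const}\}$ fail~\eqref{eqn:splitting_mean_curvature}, and it is not at all obvious how to run a comparison against an \emph{arbitrary} compact separating hypersurface in that model. To close the gap you would either have to invoke Cai--Galloway's original Theorem~\ref{thm:cg}(2) (the $m=0$ case of the statement is literally that theorem, whose conclusion has no $\cosh$-alternative), or --- more in the spirit of a self-contained proof --- introduce the second Busemann function $b$ associated to the ray into the other end, show $\Delta_\phi b\ge-1$ via Proposition~\ref{prop:laplacian_comparison}, and apply the maximum principle to $F=\beta+b$ as the paper does; this forces $\Delta_\phi\beta=1$, $|\nabla\beta|=1$, and then the Bochner equality gives the $e^{2t/(m+n-1)}$ splitting directly, with no $\cosh$ case ever arising. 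That is precisely what the paper's proof does, and it is why the paper never has to confront the exceptional case.

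Two smaller points. First, in the case $m=\infty$ your claim that ``$y$ is nonincreasing as long as $y\le-1$'' is both misphrased and weaker than what is true: from the Bochner formula with $\Ric_\phi^\infty\ge0$ one gets $y'=-|\nabla^2\rho|^2-\Ric_\phi^\infty(\nabla\rho,\nabla\rho)\le0$ \emph{unconditionally}, so $y(\ell_k)\le y(0)\le-h_k\to-1$ with no restriction on the sign of $y$; the bound $|\nabla\phi|^2\le1$ is actually needed elsewhere (e.g.\ in the Laplacian comparison $\Delta_\phi b\ge-1$ for the opposing Busemann function, Proposition~\ref{prop:laplacian_comparison}(2)), not here. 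Second, you write that $\Delta_\phi\beta\ge1$ holds ``distributionally'' and invoke Barta's inequality; what the Calabi-trick construction actually delivers is the (generalized) support sense, and some care (as in Proposition~\ref{prop:baby_lee}, which works pointwise at a maximum with support functions) is needed to justify the Rayleigh-quotient bound from a support-sense differential inequality. Neither of these is fatal, but both should be stated correctly.
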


Moreover, we will show in Section~\ref{sec:sharp} that Theorem~\ref{thm:splitting} is sharp in the same sense as Theorem~\ref{thm:cg}: If one relaxes the assumption~\eqref{eqn:splitting_mean_curvature} to only require that the limit is finite, then there are additional examples with two ends which do not split isometrically as~\eqref{eqn:splitting_conclusion}.  In particular, Example~\ref{ex:besse} gives such an example which, in the limiting case $m=\infty$, is a complete expanding gradient Ricci soliton with $\Ric_\phi^\infty=-g$ such that $R\to-n$ at infinity in one of its ends.  This shows that it is not enough to simply assume that $R>-n$ in Theorem~\ref{thm:munteanu_wang2011b}; equivalently, it is not enough to consider gradient Ricci solitons with nonconstant soliton potentials.

There are three key aspects of Theorem~\ref{thm:splitting} which we wish to emphasize.  First, Theorem~\ref{thm:splitting} can be regarded as a rigidity result; in Section~\ref{sec:splitting} we will show that if any hypersurface in Theorem~\ref{thm:splitting} satisfies $H_\phi>1$, then $M$ has one end.  Second, Theorem~\ref{thm:splitting} makes sense in the limit $m=\infty$, where it is a statement about complete smooth metric measure spaces with nonnegative $\infty$-Bakry-\'Emery Ricci tensor which admit a sequence of hypersurfaces tending to infinity having weighted mean curvature tending to one.  Third, we will prove Theorem~\ref{thm:splitting} by modifying the proof of Theorem~\ref{thm:cg} given by Cai and Galloway~\cite{CaiGalloway1999}, which amounts to understanding the behavior of the Busemann function associated to the sequence $\{\Sigma_k\}$.  In particular, our proof does not use Theorem~\ref{thm:weighted_cheng}, though an idea from our proof can also be used to show that Theorem~\ref{thm:splitting} is nevertheless a consequence of Theorem~\ref{thm:weighted_cheng}.

The second goal of this article is to show that Theorem~\ref{thm:splitting} can be used to prove a connectedness result for conformally compact quasi-Einstein smooth metric measure spaces analogous to the one established in~\cite{CaiGalloway1999,WittenYau1999}.  These are smooth metric measure spaces $(M^n,g,e^{-\phi}\dvol,m)$ with $m<\infty$ such that $\Ric_\phi^m=\lambda g$ for some constant $\lambda<0$ which are conformally compact in the sense of conformal equivalence of smooth metric measure spaces as defined in~\cite{Case2010a}.  More precisely, we will prove the following result.

\begin{thm}
\label{thm:connectedness}
Let $(M^n,g,e^{-\phi}\dvol,m)$ be a conformally compact quasi-Einstein smooth metric measure space with $m<\infty$.  Suppose that the first eigenvalue of the weighted conformal Laplacian of the conformal boundary $[\partial M,\og\rv_{T\partial M},\ov^m\dvol]$ is nonnegative.  If $m>0$, suppose additionally that $(M^n,g,e^{-\phi}\dvol,m)$ has nonnegative characteristic constant.  Then $\partial M$ is connected.
\end{thm}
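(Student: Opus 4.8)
\emph{Proof proposal.} The plan is to argue by contradiction in the spirit of the Cai--Galloway connectedness result, with the Fefferman--Graham expansion and the Yamabe invariant of the conformal infinity replaced by their weighted analogues from \cite{Case2010a}, and with Theorem~\ref{thm:splitting} replacing Theorem~\ref{thm:cg}. Suppose $\partial M$ has components $\Sigma_1,\dots,\Sigma_j$ with $j\geq 2$. Multiplying $g$ by a positive constant rescales the quasi-Einstein constant $\lambda$ and changes neither the conformal infinity nor the fact that the space is conformally compact quasi-Einstein, so I may assume $\Ric_\phi^m=-\frac{1}{m+n-1}g$; since $m<\infty$, the auxiliary bound $|\nabla\phi|^2\leq 1$ is vacuous, and the interior hypotheses of Theorem~\ref{thm:splitting} hold.

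The crux is a normal form for $(g,\phi)$ near a single boundary component, say $\Sigma_1$. Using the assumption that the first eigenvalue of the weighted conformal Laplacian of the conformal infinity is nonnegative, I would choose a representative $(\og_0,\ov_0^m\dvol)$ of $[\partial M,\og|_{T\partial M},\ov^m\dvol]$ whose weighted scalar curvature $\mathcal{S}$ is nonnegative; when $m>0$ this step also uses the nonnegativity of the characteristic constant, which is inherited by the conformal infinity and enters the relation between the weighted conformal Laplacian and the weighted scalar curvature, so that $\mathcal{S}\geq 0$ follows only with that sign in hand. The quasi-Einstein equation then forces, on a collar of $\Sigma_1$, the normal form $g=ds^2+g_s$ with $g_s=e^{\frac{2s}{m+n-1}}\og_0+O(1)$ as $s\to\infty$ (here $s$ is the $g$-distance to a fixed slice), together with a compatible expansion $\phi=-\frac{ms}{m+n-1}+O(1)$; this is the weighted analogue of the Fefferman--Graham expansion, and the first two terms of $g_s$ and of $\phi$ are determined by the boundary data. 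Writing $\Sigma_1^r$ for the slice $\{s=-(m+n-1)\log r\}$ and computing the weighted mean curvature with respect to the unit normal pointing toward $\Sigma_1$, a direct calculation should give
\[ H_\phi|_{\Sigma_1^r} = 1 + c\, r^2\, \mathcal{S} + O(r^4) \]
with $c>0$ a universal constant, uniformly on $\Sigma_1$. The leading value $1$ is precisely the weighted mean curvature of the slices of the model \eqref{eqn:splitting_conclusion}, and $\mathcal{S}\geq 0$ makes the subleading term nonnegative. I expect establishing this normal form and identifying the subleading coefficient of $H_\phi$ as a nonnegative multiple of $\mathcal{S}$ to be the main obstacle, since it requires pushing the quasi-Einstein equation through Case's conformal calculus for smooth metric measure spaces; the unweighted computation $H=(n-1)+\frac{r^2}{2(n-2)}R_{\og_0}+O(r^4)$ is the model to imitate.

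Granting this, apply Theorem~\ref{thm:splitting} to the sequence $\{\Sigma_1^{r_k}\}$ with $r_k\to 0$. Each $\Sigma_1^{r_k}$ separates $M$ into the collar of $\Sigma_1$ and its complement, both noncompact (the latter because it contains $\Sigma_2,\dots,\Sigma_j$), so (1) holds; (2) holds because $d(o,\Sigma_1^{r_k})=-(m+n-1)\log r_k+O(1)\to\infty$; and for (3), the expansion gives $H_\phi\geq 1-O(r_k^4)$ pointwise on $\Sigma_1^{r_k}$, hence $1-h_k=O(r_k^4)$ and
\[ (1-h_k)\, e^{\frac{2 d(o,\Sigma_1^{r_k})}{m+n-1}} = O(r_k^4)\cdot O(r_k^{-2}) = O(r_k^2) \longrightarrow 0 , \]
which is \eqref{eqn:splitting_mean_curvature}. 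So by Theorem~\ref{thm:splitting}, $M$ either has one end or is isometric to the product \eqref{eqn:splitting_conclusion}. The first is impossible, since a conformally compact manifold with $j\geq 2$ boundary components has $j$ ends. The second is impossible, because the product \eqref{eqn:splitting_conclusion} is not conformally compact: at its $t\to-\infty$ end the natural compactification degenerates in the $N$-directions, so that end carries no nondegenerate conformal infinity. This contradiction shows $j=1$, i.e.\ $\partial M$ is connected.
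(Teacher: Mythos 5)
Your strategy is the paper's: normalize $\lambda=-\frac{1}{m+n-1}$, conformally change the boundary representative so its weighted scalar curvature is nonnegative (Proposition~\ref{prop:wcl_properties}), pass to the geodesic defining function of Proposition~\ref{prop:geodesic_defining_function} and the level sets $\Sigma_r$, verify hypothesis~\eqref{eqn:splitting_mean_curvature}, apply Theorem~\ref{thm:splitting}, and rule out the warped-product alternative because its $t\to-\infty$ end is not conformally compact. The substance of the theorem is thus Proposition~\ref{prop:hypersurfaces_to_infinity}, and there is a genuine gap in your write-up at exactly the step you yourself flag: you conjecture a weighted Fefferman--Graham normal form giving $H_\phi=1+c\,r^2\,\widetilde{R_\phi^m}+O(r^4)$ but do not establish it, and in fact that is more than is needed. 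The paper avoids any asymptotic expansion. Combining the weighted Gauss equation~\eqref{eqn:weighted_gauss} with Cauchy--Schwarz and the quasi-Einstein identity~\eqref{eqn:charconst} yields the pointwise inequality $\frac{m+n-2}{m+n-1}\bigl((H_\phi)^2-1\bigr)\geq\rho^2\bigl(\overline{R_\phi^m}+m\mu\ov^{-2}\bigr)$; dividing by $\rho^2$ and letting $\rho\to0$, with $\widetilde{R_\phi^m}\geq0$ and $\mu\geq0$, gives $\liminf_{\rho\to0}(H_\phi-1)\rho^{-2}\geq0$, and with $e^{2d(o,\Sigma_r)/(m+n-1)}=(r_0/r)^2$ this is precisely~\eqref{eqn:splitting_mean_curvature}. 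A one-sided inequality from the Gauss equation thus suffices; no normal form, no identification of subleading coefficients, and no fourth-order control on $H_\phi$ are required.

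One smaller imprecision: you locate the role of the characteristic constant in ``the relation between the weighted conformal Laplacian and the weighted scalar curvature'' of the conformal infinity. That relation is simply Definition~\ref{defn:weighted_conformal_laplacian} and does not involve $\mu$; moreover $\mu$ is an invariant of the interior quasi-Einstein space, not something ``inherited'' by the boundary conformal class. The characteristic constant actually enters when the identity~\eqref{eqn:charconst}, $R_\phi^m+m\mu v^{-2}=(m+n)\lambda$, is substituted into the weighted Gauss equation, which is what produces the $m\mu\ov^{-2}$ term above; the hypothesis $\mu\geq0$ is then used, alongside the sign of $\widetilde{R_\phi^m}$, to keep the right-hand side nonnegative in the boundary limit.
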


For an explanation of the terminology used in this result, see Section~\ref{sec:smms} and Section~\ref{sec:ah}.  We expect that one can make sense of Theorem~\ref{thm:connectedness} in the limiting case $m=\infty$, however it is not clear to us how to define conformally compact quasi-Einstein smooth metric measure spaces with $m=\infty$, and in particular, what should be the ``conformal infinity.''

The proof of Theorem~\ref{thm:connectedness} is based upon constructing a family of hypersurfaces tending to infinity which satisfy the hypotheses of Theorem~\ref{thm:splitting}, and closely parallels the proof given by Cai and Galloway~\cite{CaiGalloway1999} to establish the case $m=0$.  Since only one of the ends of~\eqref{eqn:splitting_conclusion} is conformally compact, Theorem~\ref{thm:connectedness} then follows.

Like Theorem~\ref{thm:splitting}, Theorem~\ref{thm:connectedness} can also be regarded as a consequence of Theorem~\ref{thm:weighted_cheng}.  More precisely, we will also prove Theorem~\ref{thm:ccqe_to_spectrum}, which states, as a generalization to smooth metric measure spaces of a result of Lee~\cite{Lee1995}, that conformally compact quasi-Einstein smooth metric measure spaces as in Theorem~\ref{thm:connectedness} achieve equality in the weighted Cheng estimate of Theorem~\ref{thm:weighted_cheng}.

Our approach to Theorem~\ref{thm:splitting} suggests possible interpretations of the two splitting theorems of Munteanu and Wang~\cite{MunteanuWang2011,MunteanuWang2011b} as consequences of some useful notion of ``conformally compact gradient Ricci solitons,'' the formalization of which we wish to highlight as an interesting open problem.  We hope that this idea of studying conformally compact gradient Ricci solitons might be useful in obtaining a better understanding of the structure of gradient Ricci solitons.

This article is organized as follows: In Section~\ref{sec:smms} we recall some facts and definitions pertaining to smooth metric measure spaces necessary for the proof of Theorem~\ref{thm:splitting}.  In Section~\ref{sec:sharp} we describe three families of quasi-Einstein smooth metric measure spaces and use them to establish the sharpness of Theorem~\ref{thm:splitting}, as well as the necessity of the scalar curvature assumption in~\cite[Theorem~1.4]{MunteanuWang2011b}.  In Section~\ref{sec:splitting} we prove Theorem~\ref{thm:splitting} through an adaptation of the argument of Cai and Galloway, and then apply the proof to realize Theorem~\ref{thm:splitting} as a special case of Theorem~\ref{thm:weighted_cheng}.  In Section~\ref{sec:ah} we make precise the notion of a conformally compact quasi-Einstein smooth metric measure space in the case $m<\infty$ and prove both Theorem~\ref{thm:connectedness} and the Lee-type result on the bottom of the spectrum of the weighted Laplacian of such a space.

\subsection*{Acknowledgments}
The first author would like to thank Ovidiu Munteanu and Guofang Wei for valuable conversations while this work was taking shape.

\section{Smooth metric measure spaces}
\label{sec:smms}

To begin, let us recall some notions important to the study of smooth metric measure spaces.  We follow~\cite{Case2010a} for our conventions, and in particular adapt our definition of the weighted mean curvature of a hypersurface to those conventions and the Riemannian conventions used by Cai and Galloway~\cite{CaiGalloway1999}.

\begin{defn}
A \emph{smooth metric measure space} is a four-tuple $(M^n,g,e^{-\phi}\dvol,m)$ of a Riemannian manifold $(M^n,g)$, a smooth measure $e^{-\phi}\dvol$ for $\phi\in C^\infty(M)$ and $\dvol$ the Riemannian volume element associated to $g$, and a dimensional parameter $m\in[0,\infty]$.
\end{defn}

We will sometimes denote smooth metric measure spaces more succinctly as triples $(M^n,g,v^m\dvol)$, where the notation $v^m\dvol$ for the measure encodes the dimensional parameter $m$ as the exponent of a function $0<v\in C^\infty(M)$.  When we do this, we will always formally define $v^m=e^{-\phi}$, with the symbol $v^\infty\dvol$ either interpreted to denote a measure $e^{-\phi}\dvol$ for some $\phi\in C^\infty(M)$ being regarded as an ``infinite-dimensional'' measure, or interpreted to denote that one should take a limit to determine the measure in the case when $v$ is defined for all $m$ at once (cf.\ Section~\ref{sec:sharp}).

The most basic object of interest on a smooth metric measure space is the \emph{weighted Laplacian} $\Delta_\phi=\Delta-\nabla\phi$.  This operator is the natural modification of the Laplacian to this setting, in the sense that it is equivalently defined by $\Delta_\phi=-\nabla^\ast\nabla$ for $\nabla^\ast$ the formal adjoint of $\nabla$ with respect to the measure $e^{-\phi}\dvol$.

The role of the dimensional parameter $m$ is to specify that geometric invariants associated to smooth metric measure spaces should be made as if the measure were that of an $(m+n)$-dimensional Riemannian manifold; see~\cite{Case2010a,Wei_Wylie} for a more precise description of this heuristic.  For us, the most important such invariants are the Bakry-\'Emery Ricci curvature and the weighted scalar curvature.

\begin{defn}
The \emph{Bakry-\'Emery Ricci tensor $\Ric_\phi^m$} and the \emph{weighted scalar curvature $R_\phi^m$} of a smooth metric measure space $(M^n,g,v^m\dvol)$ are defined by
\begin{align*}
\Ric_\phi^m & := \Ric + \nabla^2\phi - \frac{1}{m}d\phi\otimes d\phi = \Ric - \frac{m}{v}\nabla^2 v \\
R_\phi^m & := R + 2\Delta\phi - \frac{m+1}{m}\lv\nabla\phi\rv^2 = R - \frac{2m}{v}\Delta v - \frac{m(m-1)}{v^2}\lv\nabla v\rv^2 .
\end{align*}
\end{defn}

The relation between the Bakry-\'Emery Ricci tensor and the dimensional interpretation of the measure $v^m\dvol$ is made through the \emph{weighted Bochner formula}
\begin{equation}
\label{eqn:weighted_bochner}
\frac{1}{2}\Delta_\phi\lv\nabla u\rv^2 = \lv\nabla^2 u\rv^2 + \lp\nabla\Delta_\phi u,\nabla u\rp + \Ric_\phi^m(\nabla u,\nabla u) + \frac{1}{m}\lp\nabla\phi,\nabla u\rp^2 .
\end{equation}
More precisely, the Cauchy-Schwarz inequality yields from~\eqref{eqn:weighted_bochner} the \emph{weighted Bochner inequality}
\[ \frac{1}{2}\Delta_\phi\lv\nabla u\rv^2 \geq \frac{1}{m+n}\left(\Delta_\phi u\right)^2 + \lp\nabla u,\nabla\Delta_\phi u\rp + \Ric_\phi^m(\nabla u,\nabla u) \]
for all $u\in C^3(M)$, so that dimensions appear here in the same way as they do in the usual Bochner inequality for $(m+n)$-dimensional Riemannian manifolds.  In particular, this leads in the usual way to the following comparison theorem for the weighted Laplacian, proven by Qian~\cite{Qian1997} for $m<\infty$ and Wei and Wylie~\cite{Wei_Wylie} for $m=\infty$.

\begin{prop}
\label{prop:laplacian_comparison}
Let $(M^n,g,v^m\dvol)$ be a smooth metric measure space with $\Ric_\phi^m\geq\lambda g$.  Fix $p\in M$, and let $r(x)=d(p,x)$ denote the distance from $p$.
\begin{enumerate}
\item If $m<\infty$, then
\[ \Delta_\phi r \leq (m+n-1)\frac{\cn_\lambda(r)}{\sn_\lambda(r)} , \]
where $\sn_\lambda(r)$ is the unique solution to $u^{\prime\prime}+\frac{\lambda}{m+n-1} u=0$, $u(0)=0$, $u^\prime(0)=1$, and $\cn_\lambda(r)=\sn_\lambda^\prime(r)$.
\item If $m=\infty$, $\lambda=0$, and $\lv\nabla\phi\rv^2\leq a^2$, then
\[ \Delta_\phi r \leq \frac{n-1}{r} + a . \]
\end{enumerate}
\end{prop}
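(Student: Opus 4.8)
The plan is to run the classical Bochner argument for the Laplacian comparison theorem, with the weighted Bochner formula~\eqref{eqn:weighted_bochner} in place of the usual one, as in Qian~\cite{Qian1997} for $m<\infty$ and Wei--Wylie~\cite{Wei_Wylie} for $m=\infty$. Fix $p$, let $\gamma$ be a unit-speed minimizing geodesic from $p$, and restrict to the open interval on which $r=d(p,\cdot)$ is smooth near $\gamma$; the extension across the cut locus via smooth upper barriers is the routine classical argument and I would only remark on it. On this interval apply~\eqref{eqn:weighted_bochner} to $u=r$: since $\lvert\nabla r\rvert\equiv1$ the left-hand side vanishes, $\langle\nabla r,\nabla\Delta_\phi r\rangle$ is the derivative along $\gamma$ of $\psi:=\Delta_\phi r$, and $\Ric_\phi^m(\nabla r,\nabla r)\geq\lambda$. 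The one extra input beyond the generic weighted Bochner inequality stated before the proposition is that $\nabla^2 r$ annihilates $\nabla r$, so Cauchy--Schwarz on the remaining $(n-1)$-dimensional part gives $\lvert\nabla^2 r\rvert^2\geq(\Delta r)^2/(n-1)=(\psi+\phi_r)^2/(n-1)$, where $\phi_r:=\langle\nabla\phi,\nabla r\rangle$; this is exactly what will produce $m+n-1$ rather than $m+n$.

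For part (1) I substitute this bound into~\eqref{eqn:weighted_bochner} and then use the elementary inequality $(x+y)^2/(n-1)+y^2/m\geq x^2/(m+n-1)$ (minimize the left side over $y$, the minimum being attained at $y=-mx/(m+n-1)$) to arrive at the Riccati inequality
\[ \psi' + \frac{\psi^2}{m+n-1} + \lambda \leq 0 \]
along $\gamma$. (For $m=0$ this is simply the classical Laplacian comparison.) I would then compare $\psi$ with $\bar\psi(s)=(m+n-1)\cn_\lambda(s)/\sn_\lambda(s)=(m+n-1)(\log\sn_\lambda)'(s)$, which solves the associated Riccati \emph{equation}; since $\psi\sim(n-1)/s$ while $\bar\psi\sim(m+n-1)/s$ as $s\to0^+$, the difference tends to $-\infty$ at the origin, and the standard Riccati comparison (the difference satisfies a linear differential inequality with a positive integrating factor, hence stays $\leq0$) gives $\Delta_\phi r\leq(m+n-1)\cn_\lambda(r)/\sn_\lambda(r)$ wherever $\sn_\lambda>0$.

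For part (2) the $1/m$-term of~\eqref{eqn:weighted_bochner} drops out, and writing $\rho:=\Delta r=\psi+\phi_r$ and $\phi_{rr}:=\nabla^2\phi(\nabla r,\nabla r)$ (so $\psi'=\rho'-\phi_{rr}$), the Bochner inequality with $\Ric_\phi^\infty(\nabla r,\nabla r)\geq0$ becomes $\rho'+\rho^2/(n-1)\leq\phi_{rr}$. The key idea is to compare $\rho$ not with the model $(n-1)/s$ (which would require $\Ric\geq0$) but with the shifted function $\bar\rho(s):=(n-1)/s+a+\phi_r(s)$; a short computation gives
\[ \bar\rho' + \frac{\bar\rho^2}{n-1} = \phi_{rr} + \frac{2(a+\phi_r)}{s} + \frac{(a+\phi_r)^2}{n-1} \geq \phi_{rr}, \]
where nonnegativity of $a+\phi_r$ uses $\lvert\nabla\phi\rvert^2\leq a^2$ along all of $\gamma$. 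Since $\rho-\bar\rho\to-(a+\phi_r(0))\leq0$ as $s\to0^+$, the same Riccati comparison yields $\rho\leq\bar\rho$, i.e.\ $\Delta_\phi r=\rho-\phi_r\leq(n-1)/s+a$.

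The step I expect to be the main obstacle is the comparison at $s=0$: both $\psi$ and the model blow up like $1/s$, so one has to track the subleading behavior carefully to get the correct initial inequality for the difference, and in the $m=\infty$ case this is exactly what forces the choice of $\bar\rho$ above, the comparison function that absorbs $\phi_r$ and the gradient bound $a$ simultaneously. Everything else — the algebraic minimization, the Riccati comparison on the interior of the interval, and the passage to the barrier/distributional statement across the cut locus — is routine and identical to the unweighted case.
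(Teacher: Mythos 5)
The paper does not supply its own proof of Proposition~\ref{prop:laplacian_comparison}; it simply attributes the result to Qian (for $m<\infty$) and Wei--Wylie (for $m=\infty$). Your argument is a correct reconstruction of the standard weighted Riccati comparison that those references carry out. I checked the two computations that do the real work: the optimization $\frac{(x+y)^2}{n-1}+\frac{y^2}{m}\geq\frac{x^2}{m+n-1}$ (minimum at $y=-mx/(m+n-1)$) is exactly how the effective dimension $m+n-1$ rather than $m+n$ appears, and the supersolution check
\[
\bar\rho'+\frac{\bar\rho^2}{n-1}=\phi_{rr}+\frac{2(a+\phi_r)}{s}+\frac{(a+\phi_r)^2}{n-1}\geq\phi_{rr},
\]
which uses only $\phi_r\geq-a$, is correct; the shifted comparison function $\bar\rho=(n-1)/s+a+\phi_r$ is a clean way to package the $m=\infty$ case and is, if anything, slightly tidier than the integration-by-parts route one often sees. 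One small remark on the point you flagged as the ``main obstacle'': in part (1) the difference $\psi-\bar\psi$ diverges at $s=0^+$ while the coefficient in the linear inequality is also singular, so the integrating-factor argument needs the asymptotic $\psi-\bar\psi\sim-m/s$ together with $P(s)\sim s^{(m+2(n-1))/(m+n-1)}$ to see that the product tends to $0$; alternatively the substitution $G=\sn_\lambda\psi-(m+n-1)\cn_\lambda$ (which satisfies $G(0^+)=-m\leq0$ and $G'\leq-\frac{\psi}{m+n-1}G$) regularizes the endpoint outright. Either way the argument closes, and your proof is correct.
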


In this article, we will discuss quasi-Einstein smooth metric measure spaces in the following sense.

\begin{defn}
A smooth metric measure space $(M^n,g,v^m\dvol)$ is \emph{quasi-Einstein} if there is a constant $\lambda\in\bR$ such that $\Ric_\phi^m=\lambda g$.  In this case, we call $\lambda$ the \emph{quasi-Einstein constant} of $(M^n,g,v^m\dvol)$.

The \emph{characteristic constant $\mu\in\bR$} of a quasi-Einstein smooth metric measure space with quasi-Einstein constant $\lambda$ is the constant such that
\begin{equation}
\label{eqn:charconst}
R_\phi^m + m\mu v^{-2} = (m+n)\lambda
\end{equation}
holds; when $m=\infty$, the characteristic constant is $\mu=\lambda$.
\end{defn}

It is a result of Kim and Kim~\cite{Kim_Kim} that every connected quasi-Einstein smooth metric measure space has a well-defined characteristic constant.

Lastly, we must define the weighted mean curvature of a hypersurface in a smooth metric measure space.

\begin{defn}
Let $(M^n,g,v^m\dvol)$ be a smooth metric measure space and let $\Sigma^{n-1}\subset M$ be a smooth hypersurface with outward pointing normal $\eta$.  The \emph{weighted mean curvature $H_\phi$} is the function
\[ H_\phi = \delta_\phi\eta \]
for $\delta_\phi=-\nabla^\ast$ the \emph{weighted divergence}.
\end{defn}

There is a weighted Gauss equation which relates the weighted mean curvature and weighted scalar curvature of a hypersurface to the weighted scalar curvature and Bakry-\'Emery Ricci tensor of the ambient smooth metric measure space.

\begin{prop}
Let $(M^n,g,v^m\dvol)$ be a smooth metric measure space and let $\Sigma^{n-1}\subset M$ be a smooth hypersurface with outward pointing normal $\eta$.  It holds that
\begin{equation}
\label{eqn:weighted_gauss}
R_\phi^m = \widehat{R_\phi^m} + 2\Ric_\phi^m(\eta,\eta) + \lv B\rv^2 + \frac{1}{m}\lp\nabla\phi,\eta\rp^2 - (H_\phi)^2 ,
\end{equation}
where $\widehat{R_\phi^m}$ is the weighted scalar curvature of $(\Sigma,g\rv_{T\Sigma},v^m\dvol)$ and $B$ is the second fundamental form of $\Sigma\subset M$.
\end{prop}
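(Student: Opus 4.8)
The plan is to deduce the weighted Gauss equation \eqref{eqn:weighted_gauss} by purely algebraic manipulation, with no geometric input beyond the classical Gauss equation and the restriction formula for the Hessian: one substitutes the definitions of $R_\phi^m$, $\widehat{R_\phi^m}$, $\Ric_\phi^m$, and $H_\phi$ into the unweighted identity and collects terms. It is cleanest to work in the $v^m\dvol$ formalism, in which
\[ R_\phi^m = R - \frac{2m}{v}\Delta v - \frac{m(m-1)}{v^2}\lv\nabla v\rv^2, \qquad \Ric_\phi^m(\eta,\eta) = \Ric(\eta,\eta) - \frac{m}{v}\nabla^2 v(\eta,\eta), \]
and, since $v^m = e^{-\phi}$ gives $\nabla\phi = -\frac{m}{v}\nabla v$, one has $\frac1m\lp\nabla\phi,\eta\rp^2 = \frac{m}{v^2}\eta(v)^2$. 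The limiting case $m=\infty$ is handled by the same computation written in terms of $\phi$ (or by letting $m\to\infty$), with $\frac1m\lp\nabla\phi,\eta\rp^2$ read as $0$.

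First I would record three standard local identities, computed in an orthonormal frame $\{e_1,\dots,e_{n-1},\eta\}$ adapted to $\Sigma$. (i) The classical Gauss equation for scalar curvature, $R = \widehat R + 2\Ric(\eta,\eta) + \lv B\rv^2 - H^2$, obtained by contracting the Gauss curvature identity twice over $T\Sigma$; here $H = \tr B$ is the unweighted mean curvature with respect to $\eta$. (ii) The restriction of the Hessian of the fixed function $v$: for $X,Y$ tangent to $\Sigma$,
\[ \nabla^2 v(X,Y) = \widehat{\nabla}^2 v(X,Y) + \eta(v)\,B(X,Y), \]
so tracing over $T\Sigma$ yields $\Delta v = \widehat\Delta v + \nabla^2 v(\eta,\eta) + H\,\eta(v)$, while orthogonality gives $\lv\nabla v\rv^2 = \lv\widehat\nabla v\rv^2 + \eta(v)^2$. (iii) From the definitions of the weighted divergence and of $\phi$, $H_\phi = \delta_\phi\eta = \divsymb\eta - \lp\nabla\phi,\eta\rp = H + \frac{m}{v}\eta(v)$, hence $H_\phi^2 = H^2 + \frac{2mH}{v}\eta(v) + \frac{m^2}{v^2}\eta(v)^2$.

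Then I would substitute (i) and (ii) into the expression for $R_\phi^m$. Recognizing $\widehat R - \frac{2m}{v}\widehat\Delta v - \frac{m(m-1)}{v^2}\lv\widehat\nabla v\rv^2 = \widehat{R_\phi^m}$ and $\Ric(\eta,\eta) - \frac{m}{v}\nabla^2 v(\eta,\eta) = \Ric_\phi^m(\eta,\eta)$, this collapses to
\[ R_\phi^m = \widehat{R_\phi^m} + 2\Ric_\phi^m(\eta,\eta) + \lv B\rv^2 - H^2 - \frac{2mH}{v}\eta(v) - \frac{m(m-1)}{v^2}\eta(v)^2 . \]
Comparing with (iii), the remaining bracket is the numerical identity $H^2 + \frac{2mH}{v}\eta(v) + \frac{m(m-1)}{v^2}\eta(v)^2 = H_\phi^2 - \frac{m}{v^2}\eta(v)^2 = H_\phi^2 - \frac1m\lp\nabla\phi,\eta\rp^2$, which rearranges the last display into \eqref{eqn:weighted_gauss}.

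Every step is routine, so there is no real obstacle; the only care needed is internal consistency of the sign conventions — the choice of outward normal $\eta$, the sign of $B$ so that $H = \tr B = \divsymb\eta$ matches the convention of Cai--Galloway, and the matching sign of the $\eta(v)$ correction in the Hessian restriction (ii) — since those are precisely the places where an inconsistency would fail to produce the $-H_\phi^2$ and $+\frac1m\lp\nabla\phi,\eta\rp^2$ terms. As a sanity check I would verify the intermediate formulas on the model space \eqref{eqn:splitting_conclusion}, whose slices have $B = \frac{1}{m+n-1}g\rv_{T\Sigma}$ and $\eta(v) = \frac{v}{m+n-1}$, so that $H_\phi = \frac{n-1}{m+n-1} + \frac{m}{m+n-1} = 1$.
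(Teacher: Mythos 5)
Your proof is correct, and it is exactly the approach the paper takes: the paper's proof simply invokes the unweighted Gauss equation together with the restriction formulae $\lv\nabla f\rv_g^2 = \lv\hat\nabla f\rv_{\hat g}^2 + \lp\nabla f,\eta\rp^2$ and $\Delta f = \widehat{\Delta}f + \nabla^2f(\eta,\eta) + H\lp\nabla f,\eta\rp$, which are precisely your items (i)--(ii), and you carry out the same substitution explicitly in the $v$-formalism together with the needed identity $H_\phi = H + \frac{m}{v}\eta(v)$. Your step-by-step bookkeeping of the $\eta(v)$ terms and the final sanity check on the model space are faithful to, and a welcome elaboration of, what the paper leaves to the reader.
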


\begin{proof}

This follows immediately from the usual Gauss equation and the formulae
\begin{align*}
\lv\nabla f\rv_g^2 & = \lv\hat\nabla f\rv_{\hat g}^2 + \lp\nabla f,\eta\rp^2 \\
\Delta f & = \widehat{\Delta}f + \nabla^2f(\eta,\eta) + H\lp\nabla f,\eta\rp
\end{align*}
relating derivatives on $(M^n,g)$ to derivatives on $(\Sigma,\hat g=g\rv_{T\Sigma})$.
\end{proof}
\section{The sharpness of Theorem~\ref{thm:splitting}}
\label{sec:sharp}

In order to establish the sharpness of Theorem~\ref{thm:splitting}, we consider three different families of smooth metric measure spaces.  The first example is the family~\eqref{eqn:splitting_conclusion}, which we briefly discuss in order to verify that it does indeed satisfy the hypotheses of Theorem~\ref{thm:splitting}.  The other two examples both satisfy all of the hypotheses of Theorem~\ref{thm:splitting} save~\eqref{eqn:splitting_mean_curvature}: For these examples, the limit is finite but nonzero.

\begin{example}
\label{ex:hyperbolic_cusp}
Fix $n\geq 2$, $m\in[0,\infty]$, and let $(N^{n-1},h)$ be a complete Riemannian manifold with nonnegative Ricci curvature.  The smooth metric measure space
\begin{equation}
\label{eqn:negative_cusp}
\left( \bR\times N^{n-1}, dt^2\oplus e^{\frac{2t}{m+n-1}}h, \left(e^{\frac{t}{m+n-1}}\right)^m\dvol\right)
\end{equation}
is such that $\Ric_\phi^m\geq-\frac{1}{m+n-1}g$ and the weighted mean curvature $H_\phi$ of the hypersurface $\{(t,p)\colon p\in N\}$ is $H_\phi=1$.
\end{example}

The verification that $\Ric_\phi^m\geq-\frac{1}{m+n-1}g$ and $H_\phi=1$ follow easily from the formulae for the Ricci curvature and Hessian of the lift of a function on the base of a warped product, together with the fact that $H_\phi=\Delta_\phi t$, which can be found in~\cite[Chapter~9]{Besse}.  A key point about this example is that our normalization ensures that it makes sense in the limit $m=\infty$, where it is the smooth metric measure space
\[ \left(\bR\times N^{n-1}, dt^2\oplus h, e^t\dvol, \infty\right) \]
with $\Ric_\phi^\infty\geq0$ and for which the hypersurface $\{(t,p)\colon p\in N\}$ has weighted mean curvature $H_\phi=1$.  The next two examples will also be normalized so that they make sense in the limit $m=\infty$.

If one assumes additionally that $(N,h)$ is Ricci flat (i.e.\ equality holds in the specified curvature bound), then~\eqref{eqn:negative_cusp} is quasi-Einstein (i.e.\ equality holds in the lower bound on $\Ric_\phi^m$).  This behavior will also persist in the next two examples.

\begin{example}
\label{ex:negative_elliptic_gaussian_warped_cylinder}
Fix $n\geq 3$, $m\in[0,\infty]$, set $k=\sqrt{m+n-1}$, and let $(N^{n-1},h)$ be a complete Riemannian manifold with $\Ric_h\geq-\frac{m+n-2}{m+n-1}h$.  The smooth metric measure space
\begin{equation}
\label{eqn:negative_elliptic_gaussian_warped_cylinder}
\left( \bR\times N^{n-1}, dt^2\oplus\cosh^2\left(\frac{t}{k}\right)h, \left(\cosh\left(\frac{t}{k}\right)\right)^m\dvol \right)
\end{equation}
is such that $\Ric_\phi^m\geq-g$ and the weighted mean curvature $H_\phi$ of the hypersurface $\{(t,p)\colon p\in N\}$ is
\begin{equation}
\label{eqn:negative_elliptic_gaussian_warped_cylinder_mean_curvature}
\begin{split}
H_\phi & = \sqrt{m+n-1}\tanh\left(\frac{t}{\sqrt{m+n-1}}\right) \\
& = \sqrt{m+n-1} - 2\sqrt{m+n-1}e^{-\frac{2t}{\sqrt{m+n-1}}} + o\left(e^{-\frac{2t}{\sqrt{m+n-1}}}\right)
\end{split}
\end{equation}
for $t$ large.
\end{example}

Using the terminology of~\cite{Case2010a,HePetersenWylie2010}, this example is obtained from the one-dimensional negative elliptic Gaussian by taking a warped product.  In the case $m=\infty$, this is the product
\[ \left(\bR\times N^{n-1}, dt^2\oplus h, e^{t^2/2}\dvol \right) \]
of the one-dimensional expanding Gaussian with a Riemannian manifold with Ricci curvature bounded below by $-1$.  In particular, Munteanu and Wang~\cite{MunteanuWang2011b} showed that the only complete expanding gradient Ricci solitons with $\Ric_\phi^\infty=-g$ and $R\geq-(n-1)$ are of this form.

The verification of the first line of~\eqref{eqn:negative_elliptic_gaussian_warped_cylinder_mean_curvature} again follows easily from the formula for the Laplacian of a warped product, while the second line uses elementary properties of the hyperbolic tangent function.  In particular, \eqref{eqn:negative_elliptic_gaussian_warped_cylinder_mean_curvature} reveals that for $h_t$ defined in terms of $\Sigma_t$ as in Theorem~\ref{thm:splitting},
\[ \lim_{t\to\infty} \left(\sqrt{m+n-1}-h_t\right)e^{\frac{2t}{\sqrt{m+n-1}}} = 2\sqrt{m+n-1} ; \]
i.e.\ Example~\ref{ex:negative_elliptic_gaussian_warped_cylinder} establishes the sharpness of Theorem~\ref{thm:splitting}, as easily follows by rescaling so that $\Ric_\phi^m\geq-\frac{1}{m+n-1}g$.

\begin{example}
\label{ex:besse}
Fix $n\geq 4$, $m\in[0,\infty]$, and $(N^{n-2},h)$ a complete Riemannian manifold satisfying $\Ric_h\geq-\frac{m+n-3}{m+n-1}h$.  Let $v\colon\bR\to(0,\infty)$ be the unique solution to the ODE
\begin{equation}
\label{eqn:besse_ode}
\left(\frac{dv}{dt}\right)^2 = \frac{v^2-1}{m+n-1} + \frac{2(m+n-3)^{(m+n-3)/2}}{(m+n-1)^{(m+n+1)/2}}v^{3-m-n}
\end{equation}
such that $v(0)=1$ and $v^\prime>0$.  Then the smooth metric measure space
\begin{equation}
\label{eqn:besse}
\left( \bR\times S^1\times N^{n-2}, dt^2\oplus\left((m+n-1)v^\prime(t)\right)^2d\theta^2\oplus \left(v(t)\right)^2 h, \left(v(t)\right)^m\dvol\right)
\end{equation}
is such that $\Ric_\phi^m\geq-g$ and the weighted mean curvature $H_\phi$ of the level set $\Sigma_t=\{(t,p)\colon p\in S^1\times N^{n-2}\}$ is
\begin{equation}
\label{eqn:besse_mean_curvature}
\begin{split}
H_\phi & = \frac{v^{\prime\prime}(t)}{v^\prime(t)} + (m+n-2)\frac{v^\prime(t)}{v(t)} \\
& = \sqrt{m+n-1} - \frac{m+n-3}{2\sqrt{m+n-1}}v^{-2} + o(v^{-2})
\end{split}
\end{equation}
for $t$ large.
\end{example}

Again using the terminology of~\cite{Case2010a}, this is the warped product of~\cite[Example~9.118(c)]{Besse} with $(N^{n-2},h)$.  In particular, we point out that this corrects an error in~\cite{Besse}, where the factors of one-half in the exponent of the last summand of~\eqref{eqn:besse_ode} are missing.  These factors are necessary to ensure the completeness of the metric of~\eqref{eqn:besse}, which requires that $\int_a^0\frac{dv}{\sqrt{F(v)}}=\infty$ for $a$ the infimum of $v$ and $F(v)$ the right hand side of~\eqref{eqn:besse_ode}: Writing
\[ F(v) = \frac{v^2-1}{m+n-1} + 2cv^{3-m-n}, \]
it is readily verified that $c=\frac{(m+n-3)^{(m+n-3)/2}}{(m+n-1)^{(m+n+1)/2}}$ is the only value of $c$ for which this integral is infinite (cf.\ \cite[Appendix~A]{HePetersenWylie2010}).

In the limiting case $m=\infty$, Example~\ref{ex:besse} is the smooth metric measure space
\begin{equation}
\label{eqn:besse_infty}
\left( \bR\times S^1\times N^{n-2}, dt^2\oplus\left(\phi^\prime(t)\right)^2d\theta^2\oplus h, e^{-\phi(t)}\dvol \right)
\end{equation}
for $\phi\colon\bR\to(-\infty,1)$ the unique solution to the ODE
\begin{equation}
\label{eqn:besse_infty_ode}
\left(\frac{d\phi}{dt}\right)^2 = 2e^{\phi-1} - 2\phi
\end{equation}
with $\phi(0)=0$ and $\phi^\prime<0$.

% \begin{remark}
% This example can also be read off from the recent work of Bernstein--Mettler and Daniel Ramos.  We point out that their list of complete nontrivial two-dimensional gradient Ricci solitons can be read off from the classification of Einstein warped products over two-dimensional bases in~\cite[Section~9.118]{Besse} by taking the limit as the dimension of the fibers becomes infinite, as we did above.
% \end{remark}

The verification of the first line of~\eqref{eqn:besse_mean_curvature} is again a simple application of the formula for the Laplacian of a warped product.  To verify the second line, observe that
\[ \lim_{t\to\infty}\frac{v^\prime(t)}{v(t)} = \frac{1}{\sqrt{m+n-1}} . \]
The second line of~\eqref{eqn:besse_mean_curvature} follows from this and differentiation of~\eqref{eqn:besse_ode}.  Moreover, as $v\sim e^{\frac{t}{\sqrt{m+n-1}}}$ for $t$ large --- that is, there is a constant $c>0$ such that $ve^{-\frac{t}{\sqrt{m+n-1}}}\to c\not=0$ as $t\to\infty$ --- it follows from~\eqref{eqn:besse_mean_curvature} that
\[ \lim_{t\to\infty}\left(\sqrt{m+n-1}-h_t\right)e^{\frac{2t}{\sqrt{m+n-1}}} = A \]
for some $A\in(0,\infty)$.  In particular, Example~\ref{ex:besse} also establishes the sharpness of Theorem~\ref{thm:splitting}.

Note also that in the case $m=\infty$ with $(N^{n-2},h)$ such that $\Ric_h=-g$, the scalar curvature of~\eqref{eqn:besse_infty} is
\[ R = -\frac{2\phi^{\prime\prime}}{\phi^\prime} + 2 - n = -\frac{2(e^{\phi-1}-1)}{\phi^\prime} + 2 - n . \]
It follows from~\eqref{eqn:besse_infty_ode} that $\phi^\prime\sim\phi-1$ as $\phi\to1$ (i.e.\ as $t\to-\infty)$, and hence we see that $R\to-n$ as $t\to-\infty$.  In particular, if $N$ is further taken to be compact, then~\eqref{eqn:besse_infty} is a complete expanding gradient Ricci soliton with two ends and $\Ric_\phi^\infty=-g$, and therefore provides an example showing that the assumption $R\geq-(n-1)$ cannot be relaxed to $R>-n$ in~\cite[Theorem~1.4]{MunteanuWang2011b}.

Finally, we point out that in each example the end $t\to\infty$ is conformally compact in the sense of Definition~\ref{defn:ccqe}, while the ends $t\to-\infty$ in Example~\ref{ex:negative_elliptic_gaussian_warped_cylinder} and Example~\ref{ex:besse} both behave analogously to hyperbolic cusps.
\section{The proof of Theorem~\ref{thm:splitting}}
\label{sec:splitting}

Let us now turn to the proof of Theorem~\ref{thm:splitting}.  First, let us motivate both Theorem~\ref{thm:splitting} and its proof by demonstrating in what sense Theorem~\ref{thm:splitting} can be regarded as a rigidity result.

\begin{prop}
\label{prop:strong_cg}
Let $(M^n,g,v^m\dvol)$ be a complete smooth metric measure space with compact boundary $\Sigma$ such that $\Ric_\phi^m\geq-\frac{1}{m+n-1}g$ and the weighted mean curvature $H_\phi$ of $\Sigma$ satisfies $H_\phi>1$.  If $m=\infty$, assume additionally that $\lv\nabla\phi\rv^2\leq 1$.  Then $M$ is compact.
\end{prop}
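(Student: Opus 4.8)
The plan is to argue by contradiction, adapting to the weighted setting the argument Cai and Galloway use for part~(1) of Theorem~\ref{thm:cg}. Put $\rho(x)=d(x,\Sigma)$. Since $M$ is complete and $\Sigma$ is compact, each tube $\{\rho\le C\}$ is closed and bounded, hence compact, so $M$ will be compact once $\rho$ is shown to be bounded. Suppose it is not. As $\Sigma$ is compact and $H_\phi$ is continuous, $H_0:=\min_\Sigma H_\phi>1$; set $t^\ast:=(m+n-1)\operatorname{arccoth} H_0$ if $m<\infty$ and $t^\ast:=(n-1)/(H_0-1)$ if $m=\infty$, both finite. Choose $q\in M$ with $\rho(q)>t^\ast$ and a minimizing unit-speed geodesic $\sigma\colon[0,\ell]\to M$, $\ell=\rho(q)$, from $\Sigma$ to $q$ meeting $\Sigma$ orthogonally. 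Then $d(\sigma(t),\Sigma)=t$ on $[0,\ell]$, and $\sigma$ avoids both the focal locus and the cut locus of $\Sigma$ on the open interval $(0,\ell)$; consequently $\rho$ is smooth near $\sigma(t)$ for each $t\in(0,\ell)$.

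Along $\sigma$, I would study the weighted Riccati scalar $\psi(t):=\Delta_\phi\rho(\sigma(t))$, which is smooth on $(0,\ell)$ and extends continuously to $t=0$ with $\psi(0)=\Delta_\phi\rho\rv_\Sigma=-H_\phi(\sigma(0))\le-H_0<-1$, since $\nabla\rho=-\eta$ along $\Sigma$. Applying the weighted Bochner formula~\eqref{eqn:weighted_bochner} to $u=\rho$ — whose left side vanishes because $\lv\nabla\rho\rv\equiv1$, and whose term $\lp\nabla\Delta_\phi\rho,\nabla\rho\rp$ equals $\psi'(t)$ since $\sigma'=\nabla\rho$ along $\sigma$ — gives
\[ \psi' = -\lv\nabla^2\rho\rv^2 - \Ric_\phi^m(\nabla\rho,\nabla\rho) - \tfrac1m\lp\nabla\phi,\nabla\rho\rp^2 . \]
When $m<\infty$, the trace Cauchy--Schwarz bound $\lv\nabla^2\rho\rv^2+\tfrac1m\lp\nabla\phi,\nabla\rho\rp^2\ge\tfrac1{m+n-1}(\Delta_\phi\rho)^2$ together with $\Ric_\phi^m\ge-\tfrac1{m+n-1}g$ yields $\psi'\le(1-\psi^2)/(m+n-1)$. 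When $m=\infty$ the last term is absent; using $\Ric_\phi^\infty\ge0$, $\lv\nabla^2\rho\rv^2\ge(\Delta\rho)^2/(n-1)$, and $\Delta\rho=\psi+\lp\nabla\phi,\nabla\rho\rp$ with $\lv\nabla\phi\rv^2\le1$, one has $\Delta\rho\le\psi+1\le0$ whenever $\psi\le-1$, so $\psi'\le-(\psi+1)^2/(n-1)$ there. In either case $\psi'\le0$ while $\psi\le-1$, so $\psi(t)\le-H_0<-1$ on all of $[0,\ell)$.

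A standard ODE comparison then produces the contradiction. Writing $H_0':=H_\phi(\sigma(0))\ge H_0>1$: in the case $m<\infty$, $\psi$ stays below the solution $\bar\psi(t)=\coth\bigl(\tfrac{t}{m+n-1}-\operatorname{arccoth} H_0'\bigr)$ of the associated equality equation with $\bar\psi(0)=-H_0'$, which decreases to $-\infty$ as $t\uparrow(m+n-1)\operatorname{arccoth} H_0'\le t^\ast$; in the case $m=\infty$, $\psi$ stays below $\bar\psi(t)=-1+\bigl(\tfrac{t}{n-1}-\tfrac1{H_0'-1}\bigr)^{-1}$, which decreases to $-\infty$ as $t\uparrow(n-1)/(H_0'-1)\le t^\ast$. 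Either way $\psi(t)\to-\infty$ as $t$ approaches some value $\le t^\ast<\ell$, contradicting the finiteness of $\psi$ on $(0,\ell)$. Hence $\rho$ is bounded and $M$ is compact.

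I expect the only genuinely non-routine points to be the regularity of $\rho$ along a minimizing $\Sigma$-geodesic — namely that such a geodesic stays off the focal and cut loci of $\Sigma$ in its interior, the very fact Cai and Galloway rely on, which is what legitimizes working with the honestly smooth function $\psi$ rather than merely in a barrier sense — and the bookkeeping in the limiting case $m=\infty$, where the absence of the $\tfrac1m$-term in~\eqref{eqn:weighted_bochner} has to be compensated by the hypothesis $\lv\nabla\phi\rv^2\le1$. Everything else is the Riccati comparison already underlying Proposition~\ref{prop:laplacian_comparison}, here issued from the hypersurface $\Sigma$ rather than from a point.
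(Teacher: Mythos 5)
Your proof is correct, and for $m<\infty$ it is essentially the paper's argument: both work along a minimizing geodesic from $\Sigma$ to a far point, derive a Riccati inequality for $\Delta_\phi\rho$ (the paper tracks $H_\phi(s)=-\Delta_\phi\rho$ on the level sets $\rho^{-1}(s)$, which is just the sign flip of your $\psi$) from the weighted Bochner formula together with the trace Cauchy--Schwarz estimate $\lv\nabla^2\rho\rv^2\ge(\Delta\rho)^2/(n-1)$ combined with the $\frac1m\lp\nabla\phi,\nabla\rho\rp^2$ term, and then read off finite-time blow-up against the explicit solution of $F'=\frac{1}{m+n-1}(F^2-1)$, giving the same bound $d(\cdot,\Sigma)\le(m+n-1)\operatorname{arccoth}(\min_\Sigma H_\phi)$.

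The one genuine divergence is the treatment of $m=\infty$. The paper does not run a separate Riccati argument there; instead it invokes Lemma~\ref{lem:approximation}, rescaling $g\mapsto\frac{m'+n-1}{m'}g$ for $m'$ large so that the hypothesis $\lv\nabla\phi\rv^2\le1$ absorbs the $\frac{1}{m'}d\phi\otimes d\phi$ defect in the Bakry--\'Emery tensor and the rescaled weighted mean curvature $\overline{H_\phi}=\sqrt{m'/(m'+n-1)}\,H_\phi$ stays above $1$, reducing directly to the already-proved finite-$m$ case. You instead handle $m=\infty$ head-on, using $\lv\nabla\phi\rv^2\le1$ to compare $\Delta\rho$ with $\Delta_\phi\rho$ and arrive at $\psi'\le-(\psi+1)^2/(n-1)$. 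Your route is closer in spirit to ``Case 2'' of the paper's later proof of Theorem~\ref{thm:splitting}, and it has the small bonus of producing the explicit bound $d(\cdot,\Sigma)\le(n-1)/(\min_\Sigma H_\phi-1)$ in the $m=\infty$ case, which the rescaling argument does not yield directly; the paper's reduction, by contrast, is packaged as a reusable lemma and keeps all the ODE analysis in the $m<\infty$ case.
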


In particular, note that Theorem~\ref{thm:splitting} characterizes those smooth metric measure spaces with multiple ends which arise by relaxing the assumption $H_\phi>1$ in Proposition~\ref{prop:strong_cg} to the assumption~\eqref{eqn:splitting_mean_curvature}.

An important step in the proof of Proposition~\ref{prop:strong_cg} is to show that the statement when $m=\infty$ reduces to the statement when $m<\infty$.  We do this via the following general lemma.

\begin{lem}
\label{lem:approximation}
Let $(M^n,g,e^{-\phi}\dvol,\infty)$ be a complete smooth metric measure space with $\Ric_\phi^\infty\geq0$ and $\lv\nabla\phi\rv^2\leq1$.  Then for any $m^\prime\in(0,\infty)$, the smooth metric measure space
\begin{equation}
\label{eqn:approximation}
\left( M^n,\og,\ov^{m^\prime}\dvol_{\og} \right) := \left( M^n,\frac{m^\prime+n-1}{m^\prime}g, e^{-\phi}\dvol_g, m^\prime \right)
\end{equation}
satisfies $\overline{\Ric_\phi^{m^\prime}}\geq-\frac{1}{m^\prime+n-1}\og$.  Moreover, if $\Sigma\subset M$ has weighted mean curvature $H_\phi$ measured with respect to $(M^n,g,e^{-\phi}\dvol,\infty)$, then its weighted mean curvature $\overline{H_\phi}$ measured with respect to~\eqref{eqn:approximation} satisfies $\overline{H_\phi}=\sqrt{\frac{m^\prime}{m^\prime+n-1}}H_\phi$.
\end{lem}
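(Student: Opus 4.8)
The plan is to exploit the fact that $\og := \tfrac{m'+n-1}{m'}\,g$ is a \emph{constant} conformal rescaling of $g$, so that the underlying Riemannian geometry is essentially untouched and only a handful of explicit constants need tracking. Writing $c := \sqrt{(m'+n-1)/m'}$, so that $\og = c^2 g$, I would first record the standard consequences of a constant conformal change: $g$ and $\og$ share the same Levi-Civita connection, hence the same curvature tensors --- in particular $\Ric_{\og} = \Ric_g$ --- and the same Hessian $\onabla^2 f = \nabla^2 f$ of any $f \in C^\infty(M)$, while $\dvol_{\og} = c^n\,\dvol_g$. Consequently the measure $e^{-\phi}\dvol_g$ of the new space, written against \emph{its own} Riemannian volume element, is $e^{-\ophi}\dvol_{\og}$ with $\ophi := \phi + n\log c$, a constant shift of $\phi$, so that $d\ophi = d\phi$.

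Feeding this into the definition of the Bakry-\'Emery Ricci tensor gives
\[ \overline{\Ric_\phi^{m'}} = \Ric_{\og} + \onabla^2\ophi - \tfrac{1}{m'}\,d\ophi\otimes d\ophi = \Ric + \nabla^2\phi - \tfrac{1}{m'}\,d\phi\otimes d\phi = \Ric_\phi^\infty - \tfrac{1}{m'}\,d\phi\otimes d\phi. \]
Since $\Ric_\phi^\infty \geq 0$ and the hypothesis $\lv\nabla\phi\rv^2 \leq 1$ is equivalent to the pointwise inequality $d\phi\otimes d\phi \leq g$, the right-hand side is bounded below by $-\tfrac{1}{m'}g = -\tfrac{1}{m'+n-1}\og$, which is the asserted curvature bound.

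For the weighted mean curvature, the outward unit normal of $\Sigma$ with respect to $\og$ is $\overline{\eta} = c^{-1}\eta$, and I would observe that the weighted divergence $\delta_\phi = -\nabla^\ast$ is unaffected by the passage from $(g,\phi)$ to $(\og,\ophi)$ when applied to a \emph{fixed} vector field: the ordinary divergence is computed from the density $\sqrt{\det g}$, in which the overall factor $c^n$ cancels, and the weight term $d\phi(\cdot) = d\ophi(\cdot)$ does not involve the metric at all. Hence
\[ \overline{H_\phi} = \delta_{\ophi}(\overline{\eta}) = \delta_\phi(c^{-1}\eta) = c^{-1}\,\delta_\phi\eta = c^{-1}H_\phi = \sqrt{\tfrac{m'}{m'+n-1}}\,H_\phi . \]

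No step here presents a genuine difficulty; the only point demanding care is the constant-bookkeeping --- in particular, verifying that rewriting the (unchanged) measure against the (rescaled) volume element shifts the potential by only an additive constant, and keeping the sign and normalization conventions fixed for $\delta_\phi$ and $H_\phi$ in Section~\ref{sec:smms} consistent throughout.
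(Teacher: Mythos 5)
Your proof is correct and takes the same approach as the paper: the paper's own proof simply states the identity $\overline{\Ric_\phi^{m'}} = \Ric_\phi^\infty - \tfrac{1}{m'}d\phi\otimes d\phi$ and dismisses the mean-curvature formula as following ``by the scaling of the weighted mean curvature,'' which is exactly the content you spell out. You have just filled in the bookkeeping that the authors left implicit (constant conformal rescaling preserves the connection, Hessian, and Ricci tensor, shifts $\phi$ by a constant, and rescales the unit normal by $c^{-1}$ while leaving $\delta_\phi$ on vector fields unchanged).
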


\begin{proof}

The formula relating $\overline{H_\phi}$ and $H_\phi$ follows by the scaling of the weighted mean curvature.  The lower bound for $\overline{\Ric_\phi^{m^\prime}}$ follows from the fact
\[ \overline{\Ric_\phi^{m^\prime}} = \Ric_\phi^\infty - \frac{1}{m^\prime}d\phi\otimes d\phi \geq -\frac{1}{m^\prime}g, \]
where the inequality follows from the assumption $\lv\nabla\phi\rv_g^2\leq 1$.
\end{proof}

\begin{proof}[Proof of Proposition~\ref{prop:strong_cg}]

First, we observe that it suffices to consider the case $m<\infty$.  Indeed, if $m=\infty$, define $\delta=\inf_\Sigma H_\phi-1>0$.  We may thus choose $m^\prime<\infty$ sufficiently large so that $\sqrt{\frac{m^\prime}{m^\prime+n-1}}(1+\delta)>1$.  It follows from Lemma~\ref{lem:approximation} that the smooth metric measure space~\eqref{eqn:approximation} satisfies the hypotheses of Proposition~\ref{prop:strong_cg}.

To establish the proposition in the case $m<\infty$, fix $q\in M$, set $d=d(q,\partial M)$, and choose $p\in\partial M$ such that $d=d(p,q)$.  Let $\sigma\colon[0,d]\to M$ be a unit speed minimizing geodesic with $\sigma(0)=p$ and $\sigma(d)=q$.  Denote by $\rho$ the distance function from $\partial M$, $\rho(x)=d(x,\partial M)$.  Then $\rho$ is smooth when restricted to $\sigma\big([0,d)\big)$.  Denote by $H_\phi(s)=H_\phi(\sigma(s))$ the weighted mean curvature of the level set $\rho^{-1}(s)$ at $\sigma(s)$.  For $s\in[0,d)$, this is well-defined, and moreover, we have that $H_\phi(s)=-\Delta_\phi\rho$.  By the Bochner identity~\eqref{eqn:weighted_bochner} and the Cauchy-Schwarz inequality, we then see that
\begin{align*}
H_\phi^\prime(s) & = \lv\nabla^2\rho\rv^2 + \Ric_\phi^m(\nabla\rho,\nabla\rho) + \frac{1}{m}\lp\nabla\phi,\nabla\rho\rp^2 \\
& \geq \frac{1}{m+n-1}\left(H_\phi^2 - 1\right) .
\end{align*}
Let $\delta=H_\phi(p)-1>0$.  By comparison with the solution to the ODE
\[ \begin{cases}
   F^\prime & = \frac{1}{m+n-1}\left(F^2-1\right) \\
   F(0) & = 1+\delta,
   \end{cases} \]
namely
\[ F(t) = \frac{2+\delta+\delta e^{\frac{2t}{m+n-1}}}{2+\delta-\delta e^{\frac{2t}{m+n-1}}} , \]
we see that since $H_\phi$ is smooth, it must hold that $d\leq\frac{m+n-1}{2}\ln\frac{2+\delta}{\delta}$.  In particular, $M$ is compact.
\end{proof}

Let us now turn to the proof of Theorem~\ref{thm:splitting}.  As is typical in generalizing comparison results from Riemannian geometry to smooth metric measure spaces, the proof of Theorem~\ref{thm:splitting} follows from the corresponding proof in the Riemannian setting~\cite[Theorem~3]{CaiGalloway1999} with only minor modifications to take into account the role of the measure.  The main subtlety is to ensure that the result remains true in the limit $m=\infty$ (cf.\ \cite{Wei_Wylie}), and for this reason we find it useful to provide here the details of the proof of Theorem~\ref{thm:splitting}.  To that end, we recall the method of generalized support functions employed by Cai and Galloway~\cite{CaiGalloway1999}.

\begin{defn}
Let $(M^n,g,v^m\dvol)$ be a smooth metric measure space and let $f\in C^0(M)$.  A \emph{lower support for $f$ at $p\in M$} is a function $u$ defined on a neighborhood $U$ of $p$ which is $C^2$ and such that $u\leq f$ in $U$ with $u(p)=f(p)$.

We say that $\Delta_\phi f\geq a$ \emph{in the support sense} if for each $p\in M$ and each $\varepsilon>0$, there exists a lower support $u_{p,\varepsilon}$ for $f$ at $p$ such that $\Delta_\phi u_{p,\varepsilon}\geq a-\varepsilon$.

We say that $\Delta_\phi f\geq a$ \emph{in the generalized support sense} if for each $p\in M$, there exists a neighborhood $U$ of $p$ on which there is a sequence $\{f_k\}\subset C^0(U)$ such that $f_k\to f$ uniformly in $U$ and $\Delta_\phi f_k\geq a_k$ in the support sense for $a_k\to a$.
\end{defn}

The following maximum principle, which we state without proof, follows exactly as in~\cite{CaiGalloway1999,EschenburgHeintze1984}.

\begin{lem}
Let $(M^n,g,v^m\dvol)$ be a connected smooth metric measure space and suppose that $f\in C^0(M)$ satisfies $\Delta_\phi f\geq 0$ in the generalized support sense.  If $f$ attains its maximum, then it is constant.
\end{lem}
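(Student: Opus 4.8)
The plan is to prove this strong maximum principle by the classical barrier argument of E.~Hopf and Calabi, adapted to the drift Laplacian $\Delta_\phi$ and to generalized support functions exactly as in~\cite{CaiGalloway1999,EschenburgHeintze1984}. Set $m_0=\max_M f$ and $\Omega=\{x\in M:f(x)=m_0\}$; this set is nonempty by hypothesis and closed since $f$ is continuous, so it suffices to show it is open, as connectedness of $M$ then forces $\Omega=M$, i.e.\ $f\equiv m_0$. Arguing by contradiction, if $\Omega$ is not open I would fix a point $p$ in its topological boundary. The generalized support hypothesis furnishes a neighborhood $U$ of $p$, which I shrink to lie inside a geodesically convex ball, on which there are $f_k\to f$ uniformly with $\Delta_\phi f_k\geq a_k$ in the support sense and $a_k\to0$. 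Since $p\in\Omega$ is not interior to $\Omega$, I choose $z\in U\setminus\Omega$ close enough to $p$ that, setting $\rho=d(z,\Omega)\leq d(z,p)$ and picking $y\in\partial B(z,\rho)\cap\Omega$ (nonempty, since a small closed ball about $z$ is compact and meets $\Omega$), the set $\overline{B(z,\rho+s)}$ lies in $U$ for some small $s>0$ and $r:=d(z,\cdot)$ is smooth there away from $z$.

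Next I would introduce the barrier $h=e^{-\alpha r^2}-e^{-\alpha\rho^2}$ on the closed annulus $\bar A=\{\rho/2\leq r\leq\rho+s\}$ and use the identity
\[ \Delta_\phi h = e^{-\alpha r^2}\left(4\alpha^2 r^2-2\alpha-2\alpha r\,\Delta_\phi r\right), \]
which holds because $\lv\nabla r\rv^2=1$. As $r\geq\rho/2>0$ and $\Delta_\phi r$ is bounded on the compact annulus, I fix $\alpha$ large so that $\Delta_\phi h>0$ throughout $\bar A$. By construction $h>0$ where $r<\rho$, $h(y)=0$, and $h<0$ on the outer sphere $\{r=\rho+s\}$. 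The inner sphere $S=\{r=\rho/2\}$ lies in $B(z,\rho)\subset M\setminus\Omega$, so by compactness $\max_S f=m_0-\delta$ with $\delta>0$; choosing $0<\epsilon<\delta/\max_{\bar A}\lv h\rv$, the function $\tilde f:=f+\epsilon h$ then satisfies $\tilde f<m_0$ on both boundary spheres of $\bar A$ while $\tilde f(y)=m_0$ with $y$ in the interior of $\bar A$, so that $\max_{\bar A}\tilde f\geq m_0>\max_{\partial\bar A}\tilde f$.

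To conclude I would pass to the approximations $\tilde f_k:=f_k+\epsilon h$, which converge uniformly to $\tilde f$ on $\bar A$; hence for $k$ large their maximum over $\bar A$ strictly exceeds their maximum over $\partial\bar A$ and is therefore attained at an interior point $x_k$, a local maximum of $\tilde f_k$. Since $\Delta_\phi\tilde f_k=\Delta_\phi f_k+\epsilon\Delta_\phi h\geq a_k+\epsilon\min_{\bar A}\Delta_\phi h$ in the support sense, with right-hand side positive for $k$ large, there is a $C^2$ lower support $u$ for $\tilde f_k$ at $x_k$ with $\Delta_\phi u(x_k)>0$. But $u\leq\tilde f_k$ with equality at $x_k$ makes $x_k$ a local maximum of $u$, forcing $\nabla u(x_k)=0$ and $\Delta u(x_k)\leq0$, whence $\Delta_\phi u(x_k)=\Delta u(x_k)\leq0$ --- a contradiction. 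Therefore $\Omega$ is open, and $f$ is constant.

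I expect the only delicate point to be the bookkeeping that makes the maximum of the perturbed function genuinely interior: this is the reason to take the annulus extending slightly past $\partial B(z,\rho)$, so that $h$ is strictly negative on the true outer boundary while the contact point $y$ with $\Omega$ sits in the interior, and also the reason to keep every construction inside the single neighborhood $U$ supplied by the generalized support hypothesis. Notably, the curvature bound $\Ric_\phi^m\geq-\frac{1}{m+n-1}g$ plays no role here --- this is a purely local statement --- and the only feature of the weighted setting used is that the first-order term in $\Delta_\phi u=\Delta u-\lp\nabla\phi,\nabla u\rp$ drops out at a critical point of $u$, so the classical strong maximum principle transfers verbatim.
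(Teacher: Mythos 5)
Your proof is correct, and it follows the same Hopf--Calabi barrier argument that the paper cites (Cai--Galloway, Eschenburg--Heintze) as its source, with the correct observation that the drift term $-\langle\nabla\phi,\nabla u\rangle$ vanishes at a critical point of the $C^2$ support function. Since the paper states this lemma without proof, your write-up simply supplies the details of the cited argument; the barrier computation $\Delta_\phi h = e^{-\alpha r^2}\bigl(4\alpha^2 r^2 - 2\alpha - 2\alpha r\,\Delta_\phi r\bigr)$ and the passage through the generalized support approximants $f_k$ are both handled correctly.
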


\begin{proof}[Proof of Theorem~\ref{thm:splitting}]

To begin, suppose that $M$ has at least two ends, so that there is a compact set $K\subset M$ and there are two connected components $E_1,E_2\subset M\setminus K$.  Since $M\setminus K$ has finitely many components, we may assume that $\Sigma_k\subset E_1$ for all $k$.  One can then construct a line $\sigma\colon\bR\to M$ by taking a sequence of points $p_k\in E_2$ tending to infinity and $q_k\in \Sigma_k$ with $d(p_k,q_k)=d(p_k,\Sigma_k)$, connecting them via a minimizing geodesic $\sigma_k\colon[-a_k,b_k]\to M$ going from $p_k$ to $q_k$ --- that is, $\sigma_k(-a_k)=p_k$ and $\sigma_k(b_k)=q_k$ --- and considering the limit $k\to\infty$ (cf.\ \cite{CaiGalloway1999}).  Necessarily $\sigma$ will intersect $K$, and we may parametrize $\sigma$ so that $\sigma(0)=\bar o\in K$.  Associated to the two ends of $\sigma$ are two Busemann functions, one to the sequence $\{\Sigma_k\}$ and one to the ray $\sigma_{(-\infty,0]}$.

First, for each $k$, define $\beta_k\colon M\to\bR$ by
\[ \beta_k(x) = d(\bar o,\Sigma_k) - d(x,\Sigma_k) . \]
Passing to a subsequence as necessary, it is easy to conclude that $\beta_k$ converges on compact sets to a continuous function $\beta\colon M\to\bR$, the \emph{Busemann function associated with $\{\Sigma_k\}$} (cf.\ \cite{CaiGalloway1999}).  Our goal is to show that $\Delta_\phi\beta\geq 1$ in the generalized support sense.  To that end, fix $q\in M$, let $B=B(q,r)$ be a small geodesic ball around $q$, and let $k$ be large enough that $B$ is on the inside of $\Sigma_k$.  We will show that there are constants $c_k$ such that $c_k\to1$ as $k\to\infty$ and $\Delta_\phi\beta_k\rv_B\geq c_k$ in the support sense.

To show that $\Delta_\phi\beta_k\rv_B\geq c_k$ in the support sense, and also identify the constants $c_k$, fix $p\in B$ and $\varepsilon>0$, and choose $z\in\Sigma_k$ such that $d(p,z)=d(p,\Sigma_k)$.  Let $V\subset\Sigma_k$ be a neighborhood of $z$.  By bending $V$ slightly to the outside of $\Sigma_k$, we can deform $V$ to a smooth hypersurface $V^\prime\subset M$ such that (1) $z\in V^\prime$ is the unique closest point to $p$ in $V^\prime$, (2) the second fundamental form of $V^\prime$, measured with respect to the outward pointing normal, is strictly less than that of $V$, and (3) the weighted mean curvature $H_\phi^{V^\prime}(z)$ of $V^\prime$ at $z$ satisfies $H_\phi^{V^\prime}(z)\geq H_\phi^V(z)-\varepsilon\geq h_k-\varepsilon$.  Let $\gamma\colon[0,l]\to M$ be a unit speed geodesic such that $\gamma(0)=z$ and $\gamma(l)=p$.  Hence, by the construction of $V^\prime$, the function
\[ \beta_k^{p,\varepsilon}(x) := d(\bar o,\Sigma_k) - d(x,V^\prime) \]
is a lower support for $\beta_k\rv_B$ at $p$.

Now, for $s\in[0,l]$, define $H(s)=\Delta_\phi\beta_k^{p,\varepsilon}\left(\gamma(s)\right)$.

\emph{Case 1}: $m<\infty$.  As in the proof of Proposition~\ref{prop:strong_cg}, it holds that
\[ \frac{d}{ds}H(s) \geq \frac{1}{m+n-1}\left(H^2(s)-1\right) . \]
Since $H(0)\geq h_k-\varepsilon$ and $h_k-\varepsilon<1$, it follows by comparison to the ODE
\[ \begin{cases}
     F^\prime & = \frac{1}{m+n-1}\left(F^2-1\right) \\
     F(0) & = h_k-\varepsilon,
  \end{cases} \]
that
\[ \Delta_\phi\beta_k^{p,\varepsilon}(p) = H(l) \geq \frac{1+h_k-\varepsilon-(1-(h_k-\varepsilon))e^{\frac{2l}{m+n-1}}}{1+h_k-\varepsilon+(1-(h_k-\varepsilon))e^{\frac{2l}{m+n-1}}} . \]
By the triangle inequality, $e^{\frac{2l}{m+n-1}}\leq Ce^{\frac{2d(o,\Sigma_k)}{m+n-1}}$, where $C=e^\frac{2d(o,x)}{m+n-1}$, whence
\[ \Delta_\phi\beta_k^{p,\varepsilon}(p) \geq \frac{1+h_k-\varepsilon-C(1-(h_k-\varepsilon))e^{\frac{2d(o,\Sigma_k)}{m+n-1}}}{1+h_k-\varepsilon+C(1-(h_k-\varepsilon))e^{\frac{2d(o,\Sigma_k)}{m+n-1}}} . \]
This shows that at $p$, $\beta_k$ satisfies
\[ \Delta_\phi\beta_k\geq\frac{1+h_k-C(1-h_k)e^{\frac{2d(o,\Sigma_k)}{m+n-1}}}{1+h_k+C(1-h_k)e^{\frac{2d(o,\Sigma_k)}{m+n-1}}} \]
in the support sense.

\emph{Case 2}: $m=\infty$.  Using the assumption $\lv\nabla\phi\rv^2\leq1$, the Bochner inequality easily yields
\[ \frac{d}{ds}H(s) \geq \frac{1}{n+a-1}H^2(s) - \frac{1}{a} \]
for any $a>0$.  Since $h_k-\varepsilon<1<\sqrt{\frac{n+a-1}{a}}$, the analogous comparison to the above yields
\[ \sqrt{\frac{a}{n+a-1}} \Delta_\phi\beta_k^{p,\varepsilon}(p) \geq \frac{ \sqrt{\frac{n+a-1}{a}}+h_k-\varepsilon - \left(\sqrt{\frac{n+a-1}{a}}-(h_k-\varepsilon)\right) e^{2l/\sqrt{a(n+a-1)}} }{ \sqrt{\frac{n+a-1}{a}} + h_k-\varepsilon + \left(\sqrt{\frac{n+a-1}{a}} - (h_k-\varepsilon)\right)e^{2l/\sqrt{a(n+a-1)}} } . \]
In particular, taking $a=l$ and letting $\varepsilon\to0$, it follows that at $p$
\[ \Delta_\phi\beta_k\geq\sqrt{\frac{n+l-1}{l}}\left(\frac{ \sqrt{\frac{n+l-1}{l}}+h_k-\left(\sqrt{\frac{n+l-1}{l}}-h_k\right) e^{2\sqrt{\frac{l}{n+l-1}}} }{ \sqrt{\frac{n+l-1}{l}} + h_k + \left(\sqrt{\frac{n+l-1}{l}} - h_k\right)e^{2\sqrt{\frac{l}{n+l-1}}} }\right) \]
in the support sense.

Hence, in either case the assumption~\eqref{eqn:splitting_mean_curvature} implies, by taking $k\to\infty$, that $\Delta_\phi\beta\geq1$ in the generalized support sense, as desired.

Second, for each $s\in\bR$, define $b_s\colon M\to\bR$ by
\[ b_s(x) = d\big(\bar o,\sigma(s)\big) - d\big(x,\sigma(s)\big) = \lv s\rv - d\big(x,\sigma(s)\big) . \]
Passing to a subsequence as necessary, one concludes that $b_s$ converges on compact sets as $s\to-\infty$ to a continuous function $b\colon M\to\bR$, the \emph{Busemann function associated with $\sigma\rv_{(-\infty,0]}$}.  From Proposition~\ref{prop:laplacian_comparison} it follows that $\Delta_\phi b\geq-1$.

Now, set $F=\beta+b$.  By the above, we have that $\Delta_\phi F\geq 0$.  By the triangle inequality, it is easy to check that $F\leq 0$ (cf.\ \cite{CaiGalloway1999}).  Moreover, by construction we have that $F(\bar o)=0$.  Hence, by the maximum principle, $F\equiv 0$, whence $\beta=-b$.  This implies that $\Delta_\phi\beta=1$, which, by which elliptic regularity, implies that $\beta$ is smooth.  Since $\beta$ is a Busemann function, this implies that $\lv\nabla\beta\rv^2=1$.  By the Bochner formula, it thus holds that
\[ 0 = \lv\nabla^2\beta\rv^2 + \Ric_\phi^m(\nabla\beta,\nabla\beta) + \frac{1}{m}\lp\nabla\beta,\nabla\phi\rp^2 \geq \frac{1}{m+n-1}\left((\Delta_\phi\beta)^2-1\right) = 0 . \]
Hence equality holds in all steps; i.e.\ $\Ric_\phi^m(\nabla\beta,\nabla\beta)=-\frac{1}{m+n-1}$, $\nabla^2\beta\big|_{\nabla\beta^\perp}=\frac{\Delta\beta}{n-1}g\big|_{\nabla\beta^\perp}$, and $\Delta\beta=-\frac{n-1}{m}\lp\nabla\beta,\nabla\phi\rp$.  Thus the gradient flow along $\nabla\beta$ yields a diffeomorphism $M=\bR\times N$ for $N$ a compact manifold.  The form of the metric and the measure then follows immediately from the conditions on $\nabla^2\beta$ and $\lp\nabla\beta,\nabla\phi\rp$.
\end{proof}

In fact, the above proof of Theorem~\ref{thm:splitting} given above also contains the essential estimate necessary to show that smooth metric measure spaces which satisfy the hypotheses of Theorem~\ref{thm:splitting} realize equality in Theorem~\ref{thm:weighted_cheng} (cf.\ \cite{Wang2001a}).

\begin{prop}
\label{prop:baby_lee}
Let $(M^n,g,v^m\dvol)$ be a smooth metric measure space with $\Ric_\phi^m\geq -\frac{1}{m+n-1}g$; if $m=\infty$, assume additionally that $\lv\nabla\phi\rv^2\leq1$.  Fix a base point $o\in M$ and suppose that there is a sequence of compact hypersurfaces $\{\Sigma_k\}$ such that
\begin{enumerate}
\item each $\Sigma_k$ separates $M$,
\item $d(o,\Sigma_k)\to\infty$ as $k\to\infty$, and
\item the quantity
\[ h_k = \min_{x\in\Sigma_k}\left\{ H_\phi(x), 1\right\}, \]
where $H_\phi$ is the weighted mean curvature of $\Sigma_k$, satisfies
\[ \lim_{k\to\infty} (1-h_k)e^{\frac{2d(o,\Sigma_k)}{m+n-1}} = 0 . \]
\end{enumerate}
Then $\lambda_1(-\Delta_\phi)=\frac{1}{4}$.
\end{prop}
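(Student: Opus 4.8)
The plan is to prove the two inequalities $\lambda_1(-\Delta_\phi)\leq\frac14$ and $\lambda_1(-\Delta_\phi)\geq\frac14$ separately, the whole point being that the proof of Theorem~\ref{thm:splitting} already produces, with no extra work, the object needed for the second one. The upper bound is immediate: since $d(o,\Sigma_k)\to\infty$ the manifold $M$ has infinite diameter, hence is noncompact, so Theorem~\ref{thm:weighted_cheng} applies verbatim and gives $\lambda_1(-\Delta_\phi)\leq\frac14$. It therefore remains to establish the reverse inequality.

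For the lower bound I would take $o$ as the base point and form the Busemann function $\beta=\lim_k\beta_k$, $\beta_k(x)=d(o,\Sigma_k)-d(x,\Sigma_k)$, associated with $\{\Sigma_k\}$ exactly as in the proof of Theorem~\ref{thm:splitting}. The crucial observation is that the part of that proof which produces $\beta$ and shows $\Delta_\phi\beta\geq1$ in the generalized support sense uses only hypotheses~(1)--(3); neither the line $\sigma$ nor the existence of a second end of $M$ enters there. Moreover each $\beta_k$ is $1$-Lipschitz, so $\beta$ is $1$-Lipschitz, and consequently every $C^2$ lower support $u$ for $\beta$ at a point $p$ satisfies $\lv\nabla u(p)\rv\leq1$, since $u$ lies below and touches at $p$ a function of Lipschitz constant $1$.

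Next, set $f=e^{-\beta/2}>0$; I claim $\Delta_\phi f\leq-\frac14f$ in the (dual) generalized support sense. Indeed, if $u$ is a $C^2$ lower support for $\beta$ at $p$ with $\Delta_\phi u\geq1-\varepsilon$ near $p$, then, because $t\mapsto e^{-t/2}$ is decreasing, $e^{-u/2}$ is a $C^2$ upper support for $f$ at $p$, and, using $u(p)=\beta(p)$, continuity of $\nabla u$ to control $\lv\nabla u\rv^2$ near $p$, and $\Delta_\phi u\geq1-\varepsilon$,
\[ \Delta_\phi\bigl(e^{-u/2}\bigr)=e^{-u/2}\Bigl(\tfrac14\lv\nabla u\rv^2-\tfrac12\Delta_\phi u\Bigr)\leq\Bigl(\tfrac\varepsilon2-\tfrac14\Bigr)f(p)+o(1) \]
on a sufficiently small neighborhood of $p$. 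Letting $\varepsilon\to0$ and passing through the generalized limits as in the definition (applying the same manipulation to each $\beta_k$ and using $c_k\to1$) yields the claim.

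Finally I would invoke the standard fact that a positive continuous function $f$ on a complete weighted manifold with $\Delta_\phi f+\frac14f\leq0$ in the generalized support sense forces $\lambda_1(-\Delta_\phi)\geq\frac14$ --- this is the Barta/Cheng-type argument adapted to the weighted setting, cf.\ \cite{Wang2001a} together with the maximum-principle machinery of \cite{CaiGalloway1999,EschenburgHeintze1984}: one tests the variational quotient with $\psi=fh$ for $h\in C_c^\infty(M)$, integrates by parts against $e^{-\phi}\dvol$, and uses the weak superharmonicity of $f$ (turned into a distributional inequality by a Calabi-type argument) to obtain $\int_M\lv\nabla\psi\rv^2e^{-\phi}\dvol\geq\frac14\int_M\psi^2e^{-\phi}\dvol$. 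Combined with the upper bound this gives $\lambda_1(-\Delta_\phi)=\frac14$. I expect the only delicate point to be exactly this last step: all of the genuinely new geometry already lives inside the proof of Theorem~\ref{thm:splitting}, and what needs care here is the functional-analytic bookkeeping, namely converting the generalized-support-sense inequality for the Lipschitz function $\beta$ into a bona fide lower bound on the bottom of the weighted spectrum via $f=e^{-\beta/2}$.
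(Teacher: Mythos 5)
Your proposal is correct and captures the same underlying idea as the paper's proof --- a Barta-type argument built from the Busemann function $\beta$ --- but packages it in a more modular way that introduces a genuine (though surmountable) technical issue which the paper's implementation sidesteps.

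The paper establishes the lower bound directly: it takes a compact exhaustion $\{\Omega_k\}$ of $M$, lets $f$ be the positive first Dirichlet eigenfunction of $-\Delta_\phi$ on $\Omega_k$, and considers $h=fe^{\beta/2}$. At an interior maximum point $p$ of $h$ (which exists since $h$ vanishes on $\partial\Omega_k$), one applies the maximum principle: replacing $\beta$ by a $C^2$ lower support $u$ with $\Delta_\phi u(p)\geq 1-\varepsilon$ and $\lv\nabla u(p)\rv\leq 1$ (your Lipschitz observation), the function $fe^{u/2}$ also has a local max at $p$, and a short computation gives $\lambda_1(\Omega_k)\geq\frac12\Delta_\phi u(p)-\frac14\lv\nabla u(p)\rv^2\geq\frac12(1-\varepsilon)-\frac14$. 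Letting $\varepsilon\to 0$ and $k\to\infty$ gives $\lambda_1(-\Delta_\phi)\geq\frac14$. The key point is that this argument only ever requires a lower support for $\beta$ at a single point, not a distributional or integral formulation of $\Delta_\phi\beta\geq 1$. Your version instead constructs the positive supersolution $e^{-\beta/2}$ and then invokes an abstract Barta-type theorem, whose proof requires testing the Rayleigh quotient with $\psi=fh$ and integrating by parts. That integration-by-parts step needs $\Delta_\phi f\leq -\frac14 f$ in a genuinely distributional sense, and promoting a generalized-support-sense inequality to a distributional one (via Calabi regularization or inf-convolution) is precisely the delicate step you flag. It can be done, but the paper's route with the Dirichlet eigenfunction on $\Omega_k$ avoids it entirely. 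The upper bound $\lambda_1\leq\frac14$ via noncompactness and Theorem~\ref{thm:weighted_cheng} is handled the same way in both.
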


\begin{proof}

From the proof of Theorem~\ref{thm:splitting}, we know that the Busemann function $\beta$ associated to $\{\Sigma_k\}$ satisfies $\Delta_\phi\beta\geq 1$ and $\lv\nabla\beta\rv^2\leq 1$ in the generalized support sense.  Let $\{\Omega_k\}$ be an exhaustion of $M$ by compact sets; that is, each $\Omega_k$ is a compact subset of $M$ with smooth boundary and $M=\bigcup_k\Omega_k$.  Let $\lambda_1(\Omega_k)$ be the first Dirichlet eigenvalue of $-\Delta_\phi$ on $\Omega_k$, and let $f$ be such that $-\Delta_\phi f = \lambda_1(\Omega_k)f$, $f\rv_{\partial\Omega_k}=0$, and $f>0$ in the interior of $\Omega_k$.  Define $h=fe^{\beta/2}$, and let $p\in\Omega_k$ be a point which realizes the maximum of $h$.  Without loss of generality, we may assume $\beta$ is smooth at $p$ (if not, simply argue using support functions).  It thus holds at $p$ that
\[ \nabla f = -\frac{1}{2}f\nabla\beta, \qquad 0 \geq \Delta_\phi\left(f e^{\beta/2}\right) . \]
Computing at $p$, we see that
\begin{align*}
0 & \geq \Delta_\phi f + \lp\nabla f,\nabla\beta\rp + \frac{1}{2}f\Delta_\phi\beta + \frac{1}{4}f\lv\nabla\beta\rv^2 \\
& = \left(-\lambda_1(\Omega_k) + \frac{1}{2}\Delta_\phi\beta - \frac{1}{4}\lv\nabla\beta\rv^2\right) f \\
& \geq \left(-\lambda_1(\Omega_k) + \frac{1}{4}\right)f .
\end{align*}
Thus $\lambda_1(\Omega_k)\geq\frac{1}{4}$ for all $k$.  The result follows by taking the limit $k\to\infty$.
\end{proof}
\section{On conformally compact smooth metric measure spaces}
\label{sec:ah}

Let us now describe in what way Theorem~\ref{thm:splitting} yields a connectedness result for asymptotically hyperbolic smooth metric measure spaces, analogous to the original motivation of Cai and Galloway~\cite{CaiGalloway1999} for studying Theorem~\ref{thm:cg}.  To that end, we first recall what it means for two smooth metric measure spaces to be pointwise conformally equivalent (cf.\ \cite{Case2010a}).

\begin{defn}
\label{defn:scms}
Two smooth metric measure spaces $(M^n,g,e^{-\phi}\dvol_g,m)$ and $(M^n,\hat g,e^{-\hat\phi}\dvol_{\hat g},m)$ are \emph{pointwise conformally equivalent} if there is a function $f\in C^\infty(M)$ such that
\begin{equation}
\label{eqn:scms}
\left( M^n, \hat g, e^{-\hat\phi}\dvol_{\hat g},m \right) = \left( M^n, e^{-\frac{2}{m+n-2}f}g, e^{-\frac{m+n}{m+n-2}f}e^{-\phi}\dvol_g, m \right) .
\end{equation}
\end{defn}

Unlike the Riemannian case $m=0$, there are a number of different ways one might define a ``weighted Yamabe constant'' (cf.\ \cite{Case2011gns}).  To avoid a discussion of this issue, we will instead formulate our weighted analogue of the connectedness result proven by Cai and Galloway~\cite{CaiGalloway1999} in terms of the weighted conformal Laplacian.

\begin{defn}
\label{defn:weighted_conformal_laplacian}
The \emph{weighted conformal Laplacian $L_\phi^m$} on a smooth metric measure space $(M^n,g,v^m\dvol)$ is the operator
\[ L_\phi^m := -\Delta_\phi + \frac{m+n-2}{4(m+n-1)}R_\phi^m . \]
\end{defn}

As an easy consequence of~\cite[Proposition~4.4]{Case2010a}, one has sees that the weighted conformal Laplacian is indeed a good generalization of the conformal Laplacian.

\begin{prop}
\label{prop:wcl_conformally_covariant}
Let $(M^n,g,v^m\dvol)$ be a smooth metric measure space.  The weighted conformal Laplacian $L_\phi^m$ is conformally covariant, in that for any $f\in C^\infty(M)$, the weighted conformal Laplacian $\widehat{L_\phi^m}$ of the smooth metric measure space $(M^n,\hat g,\hat v^m\dvol_{\hat g})$ defined by~\eqref{eqn:scms} satisfies
\[ \widehat{L_\phi^m} = e^{\frac{m+n+2}{2(m+n-2)}f} \circ L_\phi^m \circ e^{-\frac{1}{2}f} , \]
where the exponential factors above are to be regarded as multiplication operators.
\end{prop}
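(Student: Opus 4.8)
The plan is to reduce the claim to two conformal transformation laws and then run the classical argument showing that the conformal Laplacian of an $N$-dimensional Riemannian manifold is conformally covariant, with $N=m+n$ playing the role of the dimension. It is convenient to set $u=e^{-\frac{1}{2}f}$, so that \eqref{eqn:scms} reads $\hat g=u^{\frac{4}{m+n-2}}g$ and the asserted identity becomes the pointwise statement
\[ \widehat{L_\phi^m}\psi = u^{-\frac{m+n+2}{m+n-2}}\,L_\phi^m(u\psi) \qquad \text{for all } \psi\in C^\infty(M) . \]
Thus everything reduces to a computation at an arbitrary point.

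First I would record the transformation law for the weighted Laplacian under \eqref{eqn:scms}. A short computation, using that $\dvol_{\hat g}=u^{\frac{2n}{m+n-2}}\dvol_g$ together with the normalization of the measure in \eqref{eqn:scms}, shows that $\hat\phi-\phi=\frac{m}{m+n-2}f$, and hence that the drift terms conspire to give
\[ \widehat{\Delta_\phi}\psi = u^{-\frac{4}{m+n-2}}\left( \Delta_\phi\psi + \frac{2}{u}\langle\nabla u,\nabla\psi\rangle\right) ; \]
the point is precisely that it is $(m+n-2)$, and not $(n-2)$, in the exponent, which is exactly what the measure in \eqref{eqn:scms} is engineered to produce. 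The second ingredient is the transformation law for the weighted scalar curvature, which is \cite[Proposition~4.4]{Case2010a}; rewritten in terms of $u$ it takes the weighted Yamabe form
\[ \widehat{R_\phi^m} = u^{-\frac{m+n+2}{m+n-2}}\left( R_\phi^m\,u - \frac{4(m+n-1)}{m+n-2}\Delta_\phi u \right) . \]

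With these in hand I would expand both sides. Using the product rule $\Delta_\phi(u\psi)=(\Delta_\phi u)\psi+2\langle\nabla u,\nabla\psi\rangle+u\,\Delta_\phi\psi$ one computes $u^{-\frac{m+n+2}{m+n-2}}L_\phi^m(u\psi)$ directly, and substituting the two transformation laws above into $\widehat{L_\phi^m}\psi=-\widehat{\Delta_\phi}\psi+\frac{m+n-2}{4(m+n-1)}\widehat{R_\phi^m}\psi$ gives the same expression: the factor $\frac{4(m+n-1)}{m+n-2}$ in the second law is chosen so that the $\Delta_\phi u$ terms match, the two gradient cross-terms cancel, and the powers of $u$ agree because $u^{-\frac{4}{m+n-2}}\cdot u^{-1}=u^{-\frac{m+n+2}{m+n-2}}$. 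I do not anticipate a real obstacle here; the argument is the standard one for the conformal Laplacian, and the only step demanding care is bookkeeping the constants in the conventions of \cite{Case2010a} — i.e.\ confirming that it is $m+n-2$ and $m+n-1$, rather than $n-2$ and $n-1$, that appear above. (When $m$ is a nonnegative integer one could instead bypass the computation entirely by realizing $(M^n,g,v^m\dvol)$ as an $(m+n)$-dimensional warped product over a fixed Einstein fibre, under which \eqref{eqn:scms} is an honest pointwise conformal rescaling, invoking conformal covariance of the ordinary conformal Laplacian in dimension $m+n$, and extending to all real $m\geq 0$ by observing that both sides depend polynomially on $m$; but the direct route above is shorter.)
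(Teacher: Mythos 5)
Your argument is correct and fills in the computation that the paper leaves to the reader: the paper simply asserts the proposition as ``an easy consequence of [Case2010a, Proposition~4.4],'' which is precisely the transformation law for $R_\phi^m$ that you invoke, and your two transformation laws (for $\widehat{\Delta_\phi}$ and $\widehat{R_\phi^m}$), rewritten in the weighted Yamabe form with $N=m+n$ in place of the dimension, do combine exactly as you describe. Your observation that $\hat\phi-\phi=\frac{m}{m+n-2}f$ makes the drift terms conspire to replace the usual $(n-2)$-coefficient with $(m+n-2)$ is the right way to see why the formal-dimension bookkeeping works; the alternative warped-product argument for integer $m$ with polynomial extension is a reasonable aside but unnecessary given the direct route.
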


The two main facts about the weighted conformal Laplacian we will need are the following.

\begin{prop}
\label{prop:wcl_properties}
Let $(M^n,g,v^m\dvol)$ be a compact smooth metric measure space and denote by $\lambda_1(L_\phi^m)$ the first eigenvalue of the weighted conformal Laplacian; i.e.
\begin{equation}
\label{eqn:wcl_lambda1}
\lambda_1(L_\phi^m) = \inf\left\{ \frac{(L_\phi^mw,w)}{\lV w\rV_2^2} \colon 0\not=w\in W^{1,2}(M,v^m\dvol) \right\}
\end{equation}
for $(\cdot,\cdot)$ the $L^2(M,v^m\dvol)$-inner product and $\lV w\rV_2^2=(w,w)$.
\begin{enumerate}
\item The sign of the weighted conformal Laplacian is conformally covariant; i.e.\ $\lambda_1(L_\phi^m)$ is positive (resp.\ nonnegative) if and only if $\lambda_1(\widehat{L_\phi^m})$ is positive (resp.\ nonnegative) for $\widehat{L_\phi^m}$ as in Proposition~\ref{prop:wcl_conformally_covariant}.
\item There exists a positive function $w\in C^\infty(M)$ such that $L_\phi^mw=\lambda_1(L_\phi^m)w$.
\end{enumerate}
\end{prop}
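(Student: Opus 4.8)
The plan is to deduce both statements from the conformal covariance of Proposition~\ref{prop:wcl_conformally_covariant} together with standard elliptic theory on the compact manifold $M$, essentially transcribing the classical arguments for the conformal Laplacian with $\Delta$, $R$, $\dvol$ replaced by $\Delta_\phi$, $R_\phi^m$, $v^m\dvol$.

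For part (1), fix $f\in C^\infty(M)$ and let $(M^n,\hat g,\hat v^m\dvol_{\hat g})$ be the space defined by~\eqref{eqn:scms}. Since $g$ and $\hat g$ are conformal and $M$ is compact, the map $\hat w\mapsto w:=e^{-\frac{1}{2}f}\hat w$ is a linear isomorphism of $W^{1,2}(M)$. I would then combine the operator identity $\widehat{L_\phi^m}\hat w=e^{\frac{m+n+2}{2(m+n-2)}f}L_\phi^m w$ of Proposition~\ref{prop:wcl_conformally_covariant} with the relation $\hat v^m\dvol_{\hat g}=e^{-\frac{m+n}{m+n-2}f}v^m\dvol_g$ coming from~\eqref{eqn:scms}: multiplying the three exponential weights, the exponents cancel exactly, giving
\[ \bigl(\widehat{L_\phi^m}\hat w,\hat w\bigr)_{\hat v^m\dvol_{\hat g}} = \bigl(L_\phi^m w,w\bigr)_{v^m\dvol_g} . \]
Hence the quadratic form of $\widehat{L_\phi^m}$ is positive (resp.\ nonnegative) on $W^{1,2}(M)\setminus\{0\}$ exactly when that of $L_\phi^m$ is. Since $L_\phi^m$ is elliptic and self-adjoint with respect to $v^m\dvol$ on compact $M$, its spectrum is discrete and bounded below and the infimum in~\eqref{eqn:wcl_lambda1} is attained; so $\lambda_1(L_\phi^m)>0$ (resp.\ $\geq 0$) if and only if that quadratic form is positive (resp.\ nonnegative) definite. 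Chaining these equivalences gives~(1). Note that the weighted $L^2$ norms are merely comparable, not equal, under this conformal change, which is why the argument transfers the \emph{sign} of $\lambda_1(L_\phi^m)$ but not its value.

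For part (2), let $w_0\in W^{1,2}(M)$ achieve the infimum in~\eqref{eqn:wcl_lambda1} --- such a minimizer exists by the direct method, using the compact embedding $W^{1,2}(M,v^m\dvol)\hookrightarrow L^2(M,v^m\dvol)$ --- normalized by $\lV w_0\rV_2=1$. Since $\lv\nabla\lv w_0\rv\rv=\lv\nabla w_0\rv$ almost everywhere while the zeroth-order and normalization terms are unchanged, $\lv w_0\rv$ is also a minimizer, so we may assume $w_0\geq 0$. Then $w_0$ weakly solves $\Delta_\phi w_0=\bigl(\frac{m+n-2}{4(m+n-1)}R_\phi^m-\lambda_1(L_\phi^m)\bigr)w_0$, an elliptic equation with smooth coefficients, whence $w_0\in C^\infty(M)$ by elliptic regularity. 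Finally, choosing a constant $C_0\geq 0$ with $\frac{m+n-2}{4(m+n-1)}R_\phi^m-\lambda_1(L_\phi^m)\leq C_0$ on $M$, one has
\[ \Delta_\phi w_0 - C_0 w_0 = \Bigl(\frac{m+n-2}{4(m+n-1)}R_\phi^m-\lambda_1(L_\phi^m)-C_0\Bigr)w_0 \leq 0 , \]
so $w_0$ is a nonnegative supersolution of the operator $\Delta_\phi-C_0$, whose zeroth-order coefficient $-C_0$ is nonpositive. By the strong maximum principle, $w_0$ is either identically zero or strictly positive; since $\lV w_0\rV_2=1$ it is strictly positive, which proves~(2) with $w=w_0$.

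I expect the difficulty here to be bookkeeping rather than conceptual. In part (1) one must keep straight the three conformal weights to see the exact cancellation, and appreciate that --- in contrast to the Yamabe setting, where the $L^{2^\ast}$ normalization is chosen precisely to be conformally invariant --- here only the sign of $\lambda_1(L_\phi^m)$ is conformally determined. In part (2) the point is that the potential $\frac{m+n-2}{4(m+n-1)}R_\phi^m-\lambda_1(L_\phi^m)$ carries no sign, so the strong maximum principle applies only after absorbing the large constant $C_0$, which is harmless because $M$ is compact. Everything else --- conformal covariance of $L_\phi^m$, Rellich--Kondrachov compactness, self-adjointness of $\Delta_\phi$ with respect to $v^m\dvol$, and elliptic regularity --- is already in hand.
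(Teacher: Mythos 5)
Your proof is correct and follows essentially the same route as the paper: part (1) via the equality of quadratic forms $(\widehat{L_\phi^m}\hat w,\hat w)_{\hat v^m\dvol_{\hat g}}=(L_\phi^m(e^{-f/2}\hat w),e^{-f/2}\hat w)_{v^m\dvol_g}$ coming from Proposition~\ref{prop:wcl_conformally_covariant}, and part (2) by a direct variational argument followed by elliptic regularity and the strong maximum principle. The only cosmetic difference is that you take an arbitrary minimizer and pass to its absolute value, whereas the paper starts from a minimizing sequence of positive functions; you also spell out the constant-shift trick for the maximum principle, which the paper leaves implicit.
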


\begin{proof}

By Proposition~\ref{prop:wcl_conformally_covariant}, we have that
\[ \left(\widehat{L_\phi^m}w,w\right)_{\hat v^m\dvol_{\hat g}} = \left( L_\phi^m( e^{-\frac{1}{2}f}w), e^{-\frac{1}{2}f}w \right)_{v^m\dvol_g} \]
for all $w\in C^\infty(M)$, where $(\cdot,\cdot)_{v^m\dvol}$ denotes the $L^2(M,v^m\dvol)$-inner product.  It follows immediately that the sign of $\lambda_1(L_\phi^m)$ is conformally invariant.

The existence of a positive function $w\in C^\infty(M)$ such that $L_\phi^mw=\lambda_1(L_\phi^m)w$ follows from the usual variational argument.  Namely, let $\{w_i\}\subset C^\infty(M)$ be a sequence of positive functions such that $\lV w_i\rV_2=1$ which minimizes~\eqref{eqn:wcl_lambda1}.  It follows that there exists a nonnegative $w\in W^{1,2}(M,v^m\dvol)$ such that $w_i\to w$ weakly in $W^{1,2}(M,v^m\dvol)$ and $w_i\to w$ strongly in $L^2(M,v^m\dvol)$.  It then follows from elliptic regularity and the maximum principle that $w\in C^\infty(M)$ is positive.
\end{proof}

Since all ``reasonable'' definitions of the weighted Yamabe constant have the property that the sign of the weighted Yamabe constant agrees with the sign of the first eigenvalue of the weighted conformal Laplacian, there is no problem using the latter in the formulation of Theorem~\ref{thm:connectedness}.

Before we define conformally compact quasi-Einstein smooth metric measure spaces, we first discuss two key examples of such spaces.

\begin{example}
\label{ex:qe2_zero}
Fix $3\leq n\in\bN$, $m\in[0,\infty)$, and $(F^{n-1},h)$ a compact Ricci flat manifold.  The smooth metric measure space
\begin{equation}
\label{eqn:zero_poincare_einstein}
\left( (-\infty,m+n-1] \times F^{n-1}, \rho^{-2}(t)\og, \rho^{-m-n}(t)\,\dvol_{\og}, m \right)
\end{equation}
with $t$ the coordinate on $(-\infty,m+n-1]$ and
\[ \og = dt^2\oplus h, \qquad \rho(t) = 1-\frac{t}{m+n-1} \]
is quasi-Einstein with quasi-Einstein constant $\lambda=-\frac{1}{m+n-1}$ and characteristic constant $\mu=0$.

As $m\to\infty$, this converges to the steady gradient Ricci soliton
\[ \left( \bR^n=\bR\times F^{n-1}, dt^2\oplus h, e^{t}\dvol \right) \]
\end{example}

This is nothing more than Example~\ref{ex:hyperbolic_cusp} written in different coordinates.  The key point of this presentation is that it is clearly conformally compact with defining function $\rho$ in the sense suggested by Definition~\ref{defn:scms}; that is, changing~\eqref{eqn:zero_poincare_einstein} conformally via the conformal factor $e^{-\frac{2}{m+n-2}f}=\rho^2$ in the sense of~\eqref{eqn:scms} yields a smooth metric measure space with metric and measure which extend smoothly to a nondegenerate metric and measure on the boundary $\{t=m+n-1\}$.

\begin{example}
\label{ex:qe2_negative}
Fix $3\leq n\in\bN$ and $m\in[0,\infty]$, set $k=\sqrt{m+n-1}$ and let $(S^{n-1},d\theta^2)$ be the standard $(n-1)$-sphere with its metric of constant sectional curvature one.  Then
\begin{equation}
\label{eqn:negative_poincare_einstein}
\left( \left[0,\frac{k\pi}{2}\right]\times S^{n-1}, \rho^{-2}(t)\og, \rho^{-m-n}(t)\,\dvol_{\og}, m \right)
\end{equation}
with $t$ the coordinate on $[0,\frac{k\pi}{2}]$ and
\[ \og = dt^2\oplus\left(k\sin\frac{t}{k}\right)^2d\theta^2, \qquad \rho(t) = \cos\frac{t}{k} \]
is quasi-Einstein with quasi-Einstein constant $\lambda=-1$ and characteristic constant $\mu=-\frac{m-1}{m+n-1}$.

As $m\to\infty$, this converges to the expanding Gaussian shrinker
\[ \left( \bR^n, dr^2\oplus r^2d\theta^2, e^{\frac{r^2}{2}}\dvol \right) . \]
\end{example}

In the terminology of~\cite{Case2010a,HePetersenWylie2010}, this is the negative elliptic Gaussian, and is again easily seen to be conformally compact with defining function $\rho$.

Both Example~\ref{ex:qe2_zero} and Example~\ref{ex:qe2_negative} can be regarded as natural weighted versions of the hyperbolic metric, in that (provided one chooses $(F^{n-1},h)$ to be Euclidean space in Example~\ref{ex:qe2_zero}) they are the ``weighted conformally flat'' quasi-Einstein smooth metric measure spaces with negative characteristic constant (cf.\ \cite{Case2011t,HePetersenWylie2011c}).  However, we see that they have quite different behavior in the limit $m=\infty$, necessitating a treatment of conformally compact quasi-Einstein smooth metric measure spaces which takes into account the characteristic constant as a parameter.  Further evidence of this comes from Theorem~\ref{thm:connectedness}, which requires \emph{both} the characteristic constant and the sign of the first eigenvalue $\lambda_1(L_\phi^m)$ of the conformal boundary to be nonnegative.

To begin the proof of Theorem~\ref{thm:connectedness}, let us first make precise what we mean by a conformally compact quasi-Einstein smooth metric measure space, as already anticipated in our discussion of Example~\ref{ex:qe2_zero} and Example~\ref{ex:qe2_negative}.

\begin{defn}
\label{defn:ccqe}
Suppose that $m\in[0,\infty)$.  We say that $(M^n,g,v^m\dvol)$ is a \emph{conformally compact quasi-Einstein smooth metric measure space} if:
\begin{enumerate}
\item $(M^n,g,v^m\dvol)$ is complete and quasi-Einstein.
\item There is a compact manifold $\overline{M}^n$ with boundary $\Sigma^{n-1}=\partial\overline{M}^n$ such that $M=\overline{M}\setminus\Sigma$.
\item There is a smooth metric $\og$ on $\overline{M}$, a positive function $\ov\in C^\infty(M)$, and a nonnegative function $\rho\in C^\infty(\overline{M})$ such that $\rho^{-1}(0)=\Sigma$, $d\rho\rv_\Sigma\not=0$, and
\[ \left( M^n, \og, \ov^m\dvol_{\og} \right) = \left( M^n, \rho^2g, \rho^{m+n}v^m\dvol_g \right) . \]
We call $\rho$ a \emph{defining function for $M$}.
\end{enumerate}

In this case, we call $[\Sigma,\og\rv_{T\Sigma},\ov^m\dvol]$, which is the equivalence class of all smooth metric measure spaces which are conformally equivalent to $(\Sigma,\og\rv_{T\Sigma},\ov^m\dvol)$, the \emph{conformal boundary} of $M$.
\end{defn}

It is clear from the definition that if $(M^n,g,v^m\dvol)$ is a conformally compact quasi-Einstein smooth metric measure space and $\rho$ is a defining function for $M$, then for any $\sigma\in C^\infty(\overline{M})$, the function $e^\sigma\rho$ is also a defining function for $M$.  In this way, we see that only $[\Sigma,\og\rv_{T\Sigma},\ov^m\dvol]$, and not $(\Sigma,\og\rv_{T\Sigma},\ov^m\dvol)$, is determined by $(M^n,g,v^m\dvol)$.

Our difficulty in extending Definition~\ref{defn:ccqe} to a meaningful definition in the limiting case $m=\infty$ is the present uncertainty as to what should be meant by the ``conformal boundary'' in this case.

As the following lemma shows, one can in fact regard conformally compact quasi-Einstein smooth metric measure spaces as ``asymptotically hyperbolic.''

\begin{lem}
\label{lem:weighted_ah}
Let $(M^n,g,v^m\dvol)$ be a conformally compact quasi-Einstein smooth metric measure space with quasi-Einstein constant $\lambda$, characteristic constant $\mu$, and defining function $\rho$.  Then it holds that
\[ \lv\onabla\rho\rv_{\og}^2 = -\frac{\lambda}{m+n-1} \]
on $\Sigma$.
\end{lem}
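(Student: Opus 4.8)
The plan is to read the boundary behaviour of $\rho$ off of the quasi-Einstein equation $\Ric_\phi^m=\lambda g$, working in the compactified picture. Write $g=\rho^{-2}\og$, and note that the measure identity $\ov^m\dvol_{\og}=\rho^{m+n}v^m\dvol_g$ together with $\dvol_{\og}=\rho^n\dvol_g$ forces $v=\rho^{-1}\ov$; the data of Definition~\ref{defn:ccqe} extend continuously to $\oM$, with $\og$ nondegenerate, $\ov>0$, $\rho\geq 0$, $\rho^{-1}(0)=\Sigma$, and $d\rho|_\Sigma\neq0$.

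First I would record the conformal transformation laws under $g=e^{2u}\og$ with $u=-\log\rho$: the classical formula
\[ \Ric_g=\Ric_{\og}-(n-2)\bigl(\onabla^2 u-du\otimes du\bigr)-\bigl(\oDelta u+(n-2)\lv du\rv_{\og}^2\bigr)\og, \]
together with $\nabla_g^2\psi=\onabla^2\psi-du\otimes d\psi-d\psi\otimes du+\lp du,d\psi\rp_{\og}\,\og$ for the Hessian of a function $\psi$. Substituting $u=-\log\rho$ and $v=\rho^{-1}\ov$ and collecting powers of $\rho$, the key point is that the $d\rho\otimes d\rho$ contributions cancel within each term, leaving
\[ \Ric_g=-(n-1)\lv d\rho\rv_{\og}^2\,\rho^{-2}\og+O(\rho^{-1}),\qquad \tfrac{m}{v}\nabla_g^2 v=m\lv d\rho\rv_{\og}^2\,\rho^{-2}\og+O(\rho^{-1}), \]
where the $O(\rho^{-1})$ remainders involve only $\Ric_{\og}$, $\onabla^2\rho$, $\onabla^2\ov$, $d\ov$ and $\ov^{-1}$, all of which stay bounded up to $\Sigma$.

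Since $\Ric_\phi^m=\Ric_g-\tfrac{m}{v}\nabla_g^2 v$, I would then multiply the identity $\Ric_\phi^m=\lambda g=\lambda\rho^{-2}\og$ by $\rho^2$ and let $\rho\to0$; the remainders die and one is left with $-(m+n-1)\lv d\rho\rv_{\og}^2\big|_\Sigma\,\og\big|_\Sigma=\lambda\,\og\big|_\Sigma$. Because $\og\big|_\Sigma$ is a nondegenerate Riemannian metric, this gives $\lv\onabla\rho\rv_{\og}^2=-\lambda/(m+n-1)$ on $\Sigma$, as claimed; one can check this against Example~\ref{ex:qe2_zero} and Example~\ref{ex:qe2_negative}. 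Note the characteristic constant $\mu$ plays no role: it enters only the companion identity for the weighted scalar curvature.

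I expect the only real work to be bookkeeping --- keeping the powers of $\rho$ straight in $d(\rho^{-1}\ov)$ and $\onabla^2(\rho^{-1}\ov)$, and verifying that beyond the displayed $\rho^{-2}\og$ terms all $\rho^{-3}$ and $\rho^{-2}$ contributions cancel --- together with confirming that the regularity in Definition~\ref{defn:ccqe} controls the $O(\rho^{-1})$ remainder in the limit. There is no analytic subtlety here; alternatively one could invoke the conformal transformation rule for $\Ric_\phi^m$ from \cite{Case2010a}, which amounts to the same formal substitution $n\rightsquigarrow m+n$ in the standard asymptotically hyperbolic argument.
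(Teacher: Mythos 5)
Your proof is correct, but it takes a genuinely different route from the paper's. The paper works with the trace: it invokes \cite[Proposition~4.4]{Case2010a}, the conformal transformation law for the weighted scalar curvature
\[ R_\phi^m = \rho^2\overline{R_\phi^m} + 2(m+n-1)\rho\overline{\Delta_\phi}\rho - (m+n)(m+n-1)\lv\onabla\rho\rv_{\og}^2 , \]
and substitutes the characteristic-constant identity $R_\phi^m + m\mu v^{-2} = (m+n)\lambda$ from~\eqref{eqn:charconst}, using $v^{-2}=\rho^2\ov^{-2}$. Since all the $\rho$-dependent terms are multiplied by at least $\rho$, evaluating on $\Sigma$ yields the claim in one line. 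You instead work directly with the tensor equation $\Ric_\phi^m=\lambda g$ and the raw conformal transformation laws for $\Ric$ and the Hessian, extracting the leading $\rho^{-2}$-coefficient, which requires tracking a few more cancellations (the $\rho^{-3}$ terms in $\onabla^2(\rho^{-1}\ov)$, the $d\rho\otimes d\rho$ terms in $\Ric_g$) but is entirely self-contained and avoids quoting the weighted conformal transformation formula as a black box. Your aside that $\mu$ ``plays no role'' is slightly misleading relative to the paper's proof: there $\mu$ does appear in the intermediate identity~\eqref{eqn:conf_rphim}, it just enters at order $\rho^2$ and therefore drops out on $\Sigma$; the two statements are of course consistent, since the characteristic constant only carries trace information beyond the tensorial quasi-Einstein equation you start from. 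Both routes are sound; the paper's is shorter given the available machinery from \cite{Case2010a}, while yours is more elementary and makes visible exactly why the answer depends only on $\lambda$ and not on $\mu$, $\og$, or $\ov$.
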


\begin{proof}

From~\cite[Proposition~4.4]{Case2010a}, we have that the weighted scalar curvatures $R_\phi^m$ and $\overline{R_\phi^m}$ of $(M^n,g,v^m\dvol)$ and its compactification $(\overline{M^n},\og,\ov^m\dvol)$ are related by
\[ R_\phi^m = \rho^2\overline{R_\phi^m} + 2(m+n-1)\rho\overline{\Delta_\phi}\rho - (m+n)(m+n-1)\lv\onabla\rho\rv_{\og}^2 . \]
Since $(M^n,g,v^m\dvol)$ is quasi-Einstein with quasi-Einstein constant $\lambda$ and characteristic constant $\mu$, it follows that
\begin{equation}
\label{eqn:conf_rphim}
\rho^2\left(\overline{R_\phi^m} + m\mu\ov^{-2}\right) + 2(m+n-1)\rho\overline{\Delta_\phi}\rho - (m+n)(m+n-1)\lv\onabla\rho\rv_{\og}^2 = (m+n)\lambda
\end{equation}
in $M$.  Since $\og$, $\ov$, and $\rho$ are all smooth in $\overline{M}$, we may evaluate~\eqref{eqn:conf_rphim} on $\Sigma$ to yield the result.
\end{proof}

Like asymptotically hyperbolic manifolds, given any choice of metric on the conformal boundary of a conformally compact quasi-Einstein manifold, there exists a choice of defining function for which the gradient has constant norm with respect to $\og$ in a neighborhood of $\Sigma$ (cf.\ \cite{Lee1995}).

\begin{prop}
\label{prop:geodesic_defining_function}
Let $(M^n,g,v^m\dvol)$ be a conformally compact quasi-Einstein smooth metric measure space with quasi-Einstein constant $\lambda$ and fix a representative $(\Sigma,\tilde h,\tilde v^m\dvol)$ of the conformal boundary of $(M^n,g,v^m\dvol)$.  Then there is a defining function $\rho\in C^\infty(\overline{M})$ such that the compactification
\begin{equation}
\label{eqn:conformal_compactification_formula}
\left( \overline{M}^n, \og, \ov^m\dvol_{\og} \right) = \left( \overline{M}^n, \rho^2g, \rho^{m+n}v^m\dvol_g \right)
\end{equation}
of $(M^n,g,v^m\dvol)$ satisfies $\og\rv_{T\partial\Sigma}=\tilde h$, $\ov\rv_\Sigma=\tilde v$, and
\begin{equation}
\label{eqn:geodesic_defining_fn}
\lv\onabla\rho\rv_{\og}^2 = -\frac{\lambda}{m+n-1}
\end{equation}
in a neighborhood of $\Sigma$.
\end{prop}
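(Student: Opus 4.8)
The plan is to adapt the standard construction of a geodesic (or special) defining function in conformally compact geometry, as in Graham--Lee or Lee~\cite{Lee1995}, but carrying along the weight $v$ throughout. First I would fix any defining function $\rho_0$ realizing the chosen representative of the conformal boundary, i.e.\ so that the compactification $(\oM,\rho_0^2 g,\rho_0^{m+n}v^m\dvol_g)$ has induced boundary data $(\Sigma,\tilde h,\tilde v^m\dvol)$; this is possible since by the remark following Definition~\ref{defn:ccqe} any two defining functions differ by a positive smooth factor $e^\sigma$, and we may choose $e^{\sigma\rv_\Sigma}$ to correct the boundary metric and the boundary value of the weight simultaneously. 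By Lemma~\ref{lem:weighted_ah} we already know $\lv\onabla\rho_0\rv_{\og_0}^2 = -\frac{\lambda}{m+n-1}$ \emph{on} $\Sigma$; the task is to upgrade this to an identity in a full neighborhood by a suitable modification $\rho = e^{w}\rho_0$ with $w\rv_\Sigma = 0$.

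The key step is to derive the PDE that $w$ must satisfy. Under $\og = \rho^2 g$ and $\ov^m\dvol = \rho^{m+n}v^m\dvol_g$, one computes how $\lv\onabla\rho\rv_{\og}^2$ transforms when $\rho\mapsto e^w\rho$; as in the unweighted case this produces a transport-type (noncharacteristic first-order) equation for $w$ along the approximate normal direction, of the schematic form $2\lp\onabla\log\rho_0,\onabla w\rp_{\og_0} + \lv\onabla w\rv_{\og_0}^2 = (\text{prescribed source})$, where the source is built from the failure of $\lv\onabla\rho_0\rv^2$ to be constant off $\Sigma$. Since $d\rho_0\rv_\Sigma\neq 0$, the vector field $\onabla\rho_0$ (suitably rescaled) is transverse to $\Sigma$ near $\Sigma$, so this is a noncharacteristic first-order PDE with initial data $w\rv_\Sigma = 0$; the Cauchy--Kovalevskaya/method-of-characteristics argument (in the smooth category, integrating the ODE along the flow of the gradient) then yields a unique smooth solution $w$ in a one-sided neighborhood of $\Sigma$, extended smoothly to $\oM$. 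The weight $v$ enters only through the coefficient relating $\overline{\Delta_\phi}$ and $\onabla$, so it does not affect the principal symbol and the argument goes through verbatim; alternatively one can run the entire construction with the ``dimensionally shifted'' conformal weight $e^{-\frac{2}{m+n-2}f} = \rho^2$ as in~\eqref{eqn:scms} and note that the measure is then forced.

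Concretely, I would carry out the steps in this order: (1) normalize $\rho_0$ so the boundary data are $(\tilde h,\tilde v)$, using the conformal freedom in the defining function; (2) introduce Gaussian-type coordinates near $\Sigma$ adapted to $\onabla\rho_0$, or equivalently flow from $\Sigma$; (3) write $\rho = e^w\rho_0$ and expand $\lv\onabla\rho\rv_{\og}^2 + \frac{\lambda}{m+n-1}$ to get a first-order transport equation for $w$; (4) solve this equation by integrating along the characteristic flow with initial condition $w\rv_\Sigma = 0$, which is well posed because $d\rho_0\rv_\Sigma\neq 0$ and because $\lv\onabla\rho_0\rv_{\og_0}^2 = -\frac{\lambda}{m+n-1} > 0$ on $\Sigma$ (note $\lambda<0$ for a conformally compact quasi-Einstein space), so the flow is genuinely transverse and exists for a short time; (5) extend $w$ smoothly to all of $\oM$ and verify the claimed identities~\eqref{eqn:conformal_compactification_formula}, $\og\rv_{T\partial\Sigma}=\tilde h$, $\ov\rv_\Sigma=\tilde v$, \eqref{eqn:geodesic_defining_fn}. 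The main obstacle is the transport-equation step (3)--(4): one must check carefully that the source term is smooth up to the boundary and that the characteristic vector field is nonvanishing and inward-pointing near $\Sigma$, so that the neighborhood on which $w$ — and hence~\eqref{eqn:geodesic_defining_fn} — is defined is genuinely a neighborhood of all of $\Sigma$ and not just of a piece of it; the quasi-Einstein condition is what guarantees the right-hand side of~\eqref{eqn:conf_rphim} is constant, which is the input that makes the source in the transport equation tractable.
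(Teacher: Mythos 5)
Your proposal follows the same path as the paper's proof: fix a defining function adapted to the chosen boundary representative, perturb it by $e^\sigma$ with $\sigma\rv_\Sigma=0$, expand $\lv d(e^\sigma\tilde\rho)\rv^2_{\tilde{\bar g}}$, and solve the resulting first-order noncharacteristic PDE for $\sigma$ near $\Sigma$; you also correctly note that $v$ does not enter the transport equation and that the measure is then forced by~\eqref{eqn:scms}. The one place you are actually more careful than the paper is in flagging that the characteristic vector field degenerates at $\Sigma$ as written (the coefficient is $2\tilde\rho\,d\tilde\rho$), so one must divide through by $\tilde\rho$ and invoke Lemma~\ref{lem:weighted_ah} to see that the source vanishes on $\Sigma$ and the quotient is smooth --- the paper states the noncharacteristicity without spelling this out.
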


\begin{proof}

Let $\tilde\rho$ be a defining function such that the compactification
\[ \left( \overline{M}^n, \tilde{\bar g}, \tilde{\bar v}^m\dvol_{\tilde\bar g} \right) = \left( \overline{M}^n, \tilde\rho^2g, \tilde\rho^{m+n}v^m\dvol_g \right) \]
of $(M^n,g,v^m\dvol)$ satisfies $\tilde{\bar g}\rv_{T\partial\Sigma}=\tilde h$ and $\tilde{\bar v}\rv_\Sigma=\tilde v$.  Given any $\sigma\in C^\infty(\overline{M})$ such that $\sigma\rv_\Sigma=0$, the defining function $\rho=e^\sigma\tilde\rho$ will have the same property.  On the other hand, we compute that
\[ \lv d\rho\rv_{\rho^2g}^2 = e^{-2\sigma}\lv d(e^\sigma\tilde\rho)\rv_{\tilde{\bar g}}^2 = \lv d\tilde\rho\rv_{\tilde{\bar g}}^2 + 2\tilde\rho\lp d\sigma, d\tilde\rho\rp_{\tilde{\bar g}} + \tilde\rho^2\lv d\sigma\rv_{\tilde{\bar g}}^2 . \]
Thus~\eqref{eqn:geodesic_defining_fn} holds if and only if
\begin{equation}
\label{eqn:ode}
2\tilde\rho\lp d\sigma, d\tilde\rho\rp_{\tilde{\bar g}} + \tilde\rho^2\lv d\sigma\rv_{\tilde{\bar g}}^2 = -\frac{\lambda}{m+n-1} - \lv d\tilde\rho\rv_{\tilde{\bar g}}^2 .
\end{equation}
This is a first order noncharacteristic PDE in the unknown $\sigma$, and thus there exists a solution of~\eqref{eqn:ode} with initial condition $\sigma\rv_\Sigma=0$ in some neighborhood of $\Sigma$.
\end{proof}

Using this result, a straightforward modification of an argument by Cai and Galloway~\cite[Section~3]{CaiGalloway1999} allows us to show that the smooth metric measure spaces of Theorem~\ref{thm:connectedness} satisfy the hypotheses of Theorem~\ref{thm:splitting}.

\begin{prop}
\label{prop:hypersurfaces_to_infinity}
Let $(M^n,g,v^m\dvol)$ be a conformally compact quasi-Einstein smooth metric measure space with quasi-Einstein constant $\lambda=-\frac{1}{m+n-1}$ and fix $o\in M$.  Suppose that the sign of the first eigenvalue of the weighted conformal Laplacian of the conformal boundary of $(M^n,g,v^m\dvol)$ is nonnegative.  If $m>0$, suppose additionally that the characteristic constant of $(M^n,g,v^m\dvol)$ is nonnegative.  Then there exist a sequence $\{\Sigma_k\}\subset M$ of compact hypersurfaces such that
\begin{enumerate}
\item each $\Sigma_k$ separates $M$,
\item $d(o,\Sigma_k)\to\infty$ as $k\to\infty$, and
\item the quantity
\[ h_k = \min_{x\in\Sigma_k}\left\{ H_\phi(x), 1\right\}, \]
where $H_\phi$ is the weighted mean curvature of $\Sigma_k$, satisfies
\begin{equation}
\label{eqn:mean_curvature_decay}
\lim_{k\to\infty} (1-h_k)e^{\frac{2d(o,\Sigma_k)}{m+n-1}} = 0 .
\end{equation}
\end{enumerate}
Moreover, if either the weighted conformal Laplacian of the conformal boundary of $(M^n,g,v^m\dvol)$ is positive or if $m>0$ and the characteristic constant of $(M^n,g,v^m\dvol)$ is positive, then $H_\phi>1$ for all $k$ sufficiently large.
\end{prop}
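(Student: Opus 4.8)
The strategy parallels~\cite[Section~3]{CaiGalloway1999}: we realize the $\Sigma_k$ as the level sets of a geodesic defining function adapted to a distinguished representative of the conformal boundary. Since the first eigenvalue $\lambda_1$ of the weighted conformal Laplacian of the conformal boundary is nonnegative, Proposition~\ref{prop:wcl_properties} provides a positive first eigenfunction $w$ on some representative, and by Proposition~\ref{prop:wcl_conformally_covariant} the conformal rescaling by $w$ (in the sense of~\eqref{eqn:scms}) produces a representative $(\Sigma,\tilde h,\tilde v^m\dvol)$ of the conformal boundary whose weighted scalar curvature is $\widehat{\overline{R_\phi^m}}=\frac{4(m+n-1)}{m+n-2}\lambda_1 w^{-4/(m+n-2)}\geq 0$, with strict inequality if $\lambda_1>0$. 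Applying Proposition~\ref{prop:geodesic_defining_function} to this representative yields a defining function $\rho$ with $\lv\onabla\rho\rv_{\og}^2\equiv-\lambda/(m+n-1)=(m+n-1)^{-2}$ on a collar $\{0\leq\rho<\rho_0\}$; there $\og=(m+n-1)^2\,d\rho^2\oplus h_\rho$ with $h_\rho|_{\rho=0}=\tilde h$, and, on setting $t=-(m+n-1)\log\rho$, one has $g=dt^2\oplus e^{\frac{2t}{m+n-1}}h_\rho$ and $v=e^{\frac{t}{m+n-1}}\ov$ on the collar, with $\ov|_{\rho=0}=\tilde v$. We then take $\Sigma_k=\{\rho=\rho_k\}$ for any $\rho_k\downarrow 0$; each $\Sigma_k$ is compact, diffeomorphic to $\Sigma$, and separates $M$ into $\{0<\rho<\rho_k\}$ and its complement (which contains $o$ for large $k$). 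Since $t$ is a distance function on the collar whose integral curves are unit-speed geodesics meeting the $\Sigma_k$ orthogonally, a routine triangle-inequality argument gives $d(o,\Sigma_k)=t_k+C$ for a constant $C$ independent of $k$ once $t_k:=-(m+n-1)\log\rho_k$ is large; in particular $d(o,\Sigma_k)\to\infty$ and $e^{\frac{2d(o,\Sigma_k)}{m+n-1}}=e^{\frac{2C}{m+n-1}}\rho_k^{-2}$.

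The crux is an exact formula for the weighted mean curvature of $\Sigma_\rho$ with respect to the outward normal $\partial_t$. Writing $\omega=\ov^m(\det h_\rho)^{1/2}$, a direct computation in the collar coordinates gives $H_\phi=\Delta_\phi t=1+\partial_t\log\omega$ and $\overline{\Delta_\phi}\rho=(m+n-1)^{-2}\partial_\rho\log\omega$. On the other hand, since $\lv\onabla\rho\rv_{\og}^2$ is constantly $-\lambda/(m+n-1)$ on the collar, equation~\eqref{eqn:conf_rphim} from the proof of Lemma~\ref{lem:weighted_ah} collapses to $\overline{\Delta_\phi}\rho=-\frac{\rho}{2(m+n-1)}\big(\overline{R_\phi^m}+m\mu\ov^{-2}\big)$. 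Combining these identities and using $\partial_t=-\frac{\rho}{m+n-1}\partial_\rho$, one obtains
\[ H_\phi=1+\frac{\rho^2}{2}\big(\overline{R_\phi^m}+m\mu\,\ov^{-2}\big)\qquad\text{on }\Sigma_\rho. \]
This correctly reduces to $H_\phi\equiv 1$ on Example~\ref{ex:qe2_zero}, whose compactification is weighted-scalar-flat with $\mu=0$.

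Consequently $h_k=1+\tfrac{\rho_k^2}{2}\min_{\Sigma_{\rho_k}}\min\{\overline{R_\phi^m}+m\mu\ov^{-2},\,0\}$, so that
\[ (1-h_k)\,e^{\frac{2d(o,\Sigma_k)}{m+n-1}}=\tfrac12 e^{\frac{2C}{m+n-1}}\,\max_{\Sigma_{\rho_k}}\max\{-(\overline{R_\phi^m}+m\mu\ov^{-2}),\,0\}. \]
Since $\overline{R_\phi^m}+m\mu\ov^{-2}$ extends smoothly to $\oM$ and $\Sigma_{\rho_k}\to\Sigma$, the right-hand side converges to $\tfrac12 e^{\frac{2C}{m+n-1}}\max\{-\min_\Sigma(\overline{R_\phi^m}+m\mu\ov^{-2})|_\Sigma,\,0\}$. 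Thus~\eqref{eqn:mean_curvature_decay} will hold as soon as $(\overline{R_\phi^m}+m\mu\ov^{-2})|_\Sigma\geq 0$, and, when this quantity is strictly positive on the compact set $\Sigma$, then $H_\phi>1$ on $\Sigma_k$ for all large $k$ by continuity.

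It remains --- and this is the principal difficulty --- to deduce $(\overline{R_\phi^m}+m\mu\ov^{-2})|_\Sigma\geq 0$ from the hypotheses. For this I would apply the weighted Gauss equation~\eqref{eqn:weighted_gauss} to $\Sigma\subset\oM$. In the geodesic gauge $\overline{H_\phi}|_\Sigma$ is a multiple of $\overline{\Delta_\phi}\rho|_{\rho=0}$, which vanishes by the collapsed form of~\eqref{eqn:conf_rphim}; moreover the regularity of the conformal compactification of a quasi-Einstein space (a Fefferman--Graham-type normal form) makes the linear term in the expansion of $\og$ and $\ov$ off $\Sigma$ vanish, so that $B|_\Sigma=0$ and $\lp\onabla\phi,\eta\rp|_\Sigma=0$ as well; hence~\eqref{eqn:weighted_gauss} reduces on $\Sigma$ to $\overline{R_\phi^m}|_\Sigma=\widehat{\overline{R_\phi^m}}+2\,\overline{\Ric_\phi^m}(\eta,\eta)|_\Sigma$. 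One then computes $\overline{\Ric_\phi^m}(\eta,\eta)|_\Sigma$ from the quasi-Einstein equation $\Ric_\phi^m=\lambda g$ on $M$ via the conformal transformation law for the Bakry-\'Emery Ricci tensor~\cite{Case2010a}, specialized to the geodesic gauge; the expectation is that this exhibits $\overline{R_\phi^m}|_\Sigma+m\mu\,\tilde v^{-2}$ as a positive linear combination of $\widehat{\overline{R_\phi^m}}\geq 0$ (by our choice of representative) and $m\mu\,\tilde v^{-2}\geq 0$ (vacuous when $m=0$, and forced by the hypothesis $\mu\geq 0$ when $m>0$), which completes the proof, and which, combined with the strictness remarks above, gives the final assertion of the proposition. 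Carrying out this transformation computation and verifying that every coefficient it produces is nonnegative is the step I expect to be most delicate; the remainder is the bookkeeping of~\cite[Section~3]{CaiGalloway1999} transported to smooth metric measure spaces.
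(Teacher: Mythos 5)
Your setup agrees with the paper's up to the point where you must control the sign of the weighted mean curvature: choose the representative $(\Sigma,\tilde h,\tilde v^m\dvol)$ of the conformal boundary realizing a nonnegative weighted scalar curvature via the first eigenfunction $w$ (Propositions~\ref{prop:wcl_conformally_covariant} and~\ref{prop:wcl_properties}), pass to the geodesic gauge of Proposition~\ref{prop:geodesic_defining_function}, take $\Sigma_k$ to be level sets of the defining function, and note that $e^{2d(o,\Sigma_k)/(m+n-1)}$ scales like $\rho_k^{-2}$. Your exact identity $H_\phi = 1 + \tfrac{\rho^2}{2}\left(\overline{R_\phi^m}+m\mu\ov^{-2}\right)$ on the collar, obtained by combining $H_\phi=\Delta_\phi t$, $\overline{\Delta_\phi}\rho=(m+n-1)^{-2}\partial_\rho\log(\ov^m\sqrt{\det h_\rho})$ and the collapsed form of~\eqref{eqn:conf_rphim}, also checks out. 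But the argument then hinges on $(\overline{R_\phi^m}+m\mu\ov^{-2})\rv_\Sigma\geq0$, where $\overline{R_\phi^m}$ is the \emph{ambient} weighted scalar curvature of the compactification $(\oM,\og,\ov^m\dvol)$, and you do not establish this. The route you sketch --- apply~\eqref{eqn:weighted_gauss} to $\Sigma\subset\oM$, invoke a Fefferman--Graham-type normal form to force $B\rv_\Sigma=0$ and $\lp\onabla\ophi,\eta\rp\rv_\Sigma=0$, then compute $\overline{\Ric_\phi^m}(\eta,\eta)\rv_\Sigma$ from the conformal transformation law --- is explicitly left as an expectation, and would require boundary regularity results for quasi-Einstein compactifications that are neither developed in this paper nor standard. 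This is a genuine gap, not bookkeeping.

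The paper avoids this difficulty entirely by never looking at the ambient $\overline{R_\phi^m}$ or $\overline{\Ric_\phi^m}$ on $\Sigma$. Instead, it applies the weighted Gauss equation~\eqref{eqn:weighted_gauss} to the level sets $\Sigma_r\subset M$ in the \emph{uncompactified} space, where $\Ric_\phi^m(\eta,\eta)=\lambda$ and $R_\phi^m+m\mu v^{-2}=(m+n)\lambda$ are given directly by the quasi-Einstein hypothesis and~\eqref{eqn:charconst}, and controls the $\lv B\rv^2+\tfrac1m\lp\nabla\phi,\eta\rp^2$ terms by Cauchy--Schwarz. This yields the one-sided estimate
\[ \frac{m+n-2}{m+n-1}\left((H_\phi)^2-1\right)\geq\rho^2\left(\overline{R_\phi^m}+m\mu\ov^{-2}\right), \]
but now the $\overline{R_\phi^m}$ appearing is the weighted scalar curvature of the \emph{hypersurface} $(\Sigma_r,\og\rv_{T\Sigma_r},\ov^m\dvol)$, which tends as $r\to0$ to the intrinsic $\widetilde{R_\phi^m}$ on the boundary --- exactly the quantity whose sign you already control by the choice of representative $w$. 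No expansion of $\og$ or $\ov$ off the boundary, no vanishing of $B\rv_\Sigma$, and no transformation-law computation for $\overline{\Ric_\phi^m}$ is needed. Replacing your exact identity and the unfinished $\overline{\Ric_\phi^m}(\eta,\eta)$ step with this Gauss-plus-Cauchy--Schwarz inequality closes the gap, and also gives the strict inequality in the last assertion for free when $\lambda_1(\widetilde{L_\phi^m})>0$ or $m\mu>0$.
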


\begin{proof}

Let $(\Sigma,\tilde g_1, \tilde v_1^m\dvol)$ be a choice of representative of the conformal boundary of $(M^n,g,v^m\dvol)$ and denote by $\widetilde{L_\phi^m}$ the weighted conformal Laplacian of $(\Sigma,\tilde g_1,\tilde v_1^m\dvol)$.  By Proposition~\ref{prop:wcl_properties}, there is a positive function $w\in C^\infty(\Sigma)$ such that $\widetilde{L_\phi^m}w=\lambda_1(\widetilde{L_\phi^m})w$.  Proposition~\ref{prop:wcl_conformally_covariant} implies that the weighted scalar curvature $\widetilde{R_\phi^m}$ of the representative
\begin{equation}
\label{eqn:w_scms}
\left( \Sigma,\tilde g,\tilde v^m\dvol_{\tilde g} \right) = \left( \Sigma,w^{\frac{4}{m+n-2}}\tilde g_1, w^{\frac{2(m+n)}{m+n-2}}\tilde v_1^m\dvol_{\tilde g_1} \right)
\end{equation}
of the conformal boundary of $(M^n,g,v^m\dvol)$ is such that
\[ \widetilde{R_\phi^m}=\frac{4(m+n-1)}{m+n-2}\lambda_1(\widetilde{L_\phi^m})w^{-\frac{4}{m+n-2}}, \]
which in particular has the same sign as the sign of the first eigenvalue of the weighted conformal Laplacian of the conformal boundary of $(M^n,g,v^m\dvol)$.

Now, let $\rho$ be the defining function associated to $(\Sigma,\tilde g,\tilde v^m\dvol)$ as per Proposition~\ref{prop:geodesic_defining_function}.  Thus the coordinate $r=(m+n-1)\rho$ is such that $\lv dr\rv_{\og}^2=1$ in a neighborhood of $\Sigma$, and so we have that
\[ g = \left(\frac{m+n-1}{r}\right)^2\left( dr^2\oplus h_r \right) \]
in this neighborhood, where $h_r$ is a smooth family of metrics on the level sets of $r$.  Denote these level sets by $\Sigma_r$, and denote by $\widehat{R_\phi^m}$, $\eta$, $H_\phi$, and $B$ the weighted scalar curvature, outward-pointing normal, weighted mean curvature, and second fundamental form of $(\Sigma_r,g\rv_{T\Sigma_r},(v\rv_{\Sigma_r})^m\dvol)$ in $(M^n,g,v^m\dvol)$.  As an immediate consequence of the Cauchy--Schwarz inequality and the weighted Gauss equation~\eqref{eqn:weighted_gauss}, we have that
\begin{equation}
\label{eqn:weighted_gauss_step1}
\frac{m+n-2}{m+n-1}(H_\phi)^2 \geq \widehat{R_\phi^m} + 2\Ric_\phi^m(\eta,\eta) - R_\phi^m .
\end{equation}
Using that $(M^n,g,v^m\dvol)$ is quasi-Einstein with quasi-Einstein constant $\lambda=-\frac{1}{m+n-1}$ and characteristic constant $\mu$, \eqref{eqn:weighted_gauss_step1} becomes
\begin{equation}
\label{eqn:weighted_gauss_step2}
\frac{m+n-2}{m+n-1}\left((H_\phi)^2 - 1 \right) \geq \rho^2\left( \overline{R_\phi^m} + m\mu\ov^{-2}\right)
\end{equation}
for $\overline{R_\phi^m}=\rho^{-2}\widehat{R_\phi^m}$ the weighted scalar curvature of $(\Sigma_r,\og_{T\Sigma_r},(\ov_{\Sigma_r})^m\dvol)$.  Dividing both sides of~\eqref{eqn:weighted_gauss_step2} by $\rho^2$ and taking the limit $r\to0$ yields
\begin{equation}
\label{eqn:weighted_gauss_step3}
\lim_{r\to 0} \left((H_\phi)^2 - 1\right)\rho^{-2} \geq 0,
\end{equation}
where we have now used the assumptions $\mu\geq 0$ and $\lambda_1(\widetilde{L_\phi^m})\geq 0$.  Since the weighted mean curvature of $\Sigma_r$ is positive for $r$ sufficiently small, it follows from~\eqref{eqn:weighted_gauss_step3} that
\begin{equation}
\label{eqn:weighted_gauss_step4}
\lim_{r\to 0} \left(H_\phi - 1\right)\rho^{-2} \geq 0 .
\end{equation}
Finally, fixing $r_0>0$ sufficiently small and $o\in\Sigma_{r_0}$, we compute for $0<r<r_0$ that
\[ \exp\left(d(o,\Sigma_r)\right) = \exp\left(\int_r^{r_0} \frac{m+n-1}{s}ds\right) = \left(\frac{r_0}{r}\right)^{m+n-1} . \]
Using this to write $\rho$ in~\eqref{eqn:weighted_gauss_step4} in terms of $d(o,\Sigma_r)$ then yields~\eqref{eqn:mean_curvature_decay}.

Finally, if either $\lambda_1(\widetilde{L_\phi^m})>0$ or $m\mu>0$, the inequality in~\eqref{eqn:weighted_gauss_step3} is strict, yielding the last claim of the proposition.
\end{proof}

Theorem~\ref{thm:connectedness} follows almost immediately from Proposition~\ref{prop:hypersurfaces_to_infinity}.

\begin{proof}[Proof of Theorem~\ref{thm:connectedness}]

Suppose that $M$ has at least two ends $E_1$ and $E_2$.  By Proposition~\ref{prop:hypersurfaces_to_infinity}, there is a sequence of hypersurfaces $\{\Sigma_k\}\subset M$ satisfying the hypotheses of Theorem~\ref{thm:splitting}, and hence there is a compact Ricci flat manifold $(N^{n-1},h)$ such that $(M^n,g,v^m\dvol)$ is isometric to
\[ \left( \bR\times N^{n-1}, dt^2\oplus e^{\frac{2t}{m+n-1}}h, e^{\frac{mt}{m+n-1}}\dvol \right) . \]
However, the end $(-\infty,0]\times N$ is not conformally compact, a contradiction.
\end{proof}

As another corollary of Proposition~\ref{prop:hypersurfaces_to_infinity}, we have the following Lee-type~\cite{Lee1995} result on the bottom of the spectrum of the weighted Laplacian of a certain class of conformally compact quasi-Einstein smooth metric measure spaces (see also~\cite{Wang2001a}).  In particular, Theorem~\ref{thm:connectedness} is also a consequence of Theorem~\ref{thm:weighted_cheng}.

\begin{thm}
\label{thm:ccqe_to_spectrum}
Let $(M^n,g,v^m\dvol)$ be a conformally compact quasi-Einstein smooth metric measure space with quasi-Einstein constant $\lambda=-\frac{1}{m+n-1}$ such that the first eigenvalue of the weighted conformal Laplacian of the conformal boundary of $(M^n,g,v^m\dvol)$ is nonnegative.  If $m>0$, suppose additionally that the characteristic constant of $(M^n,g,v^m\dvol)$ is nonnegative.  Then
\[ \lambda_1(-\Delta_\phi) = \frac{1}{4} . \]
\end{thm}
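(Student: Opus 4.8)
The plan is to deduce this immediately from Proposition~\ref{prop:hypersurfaces_to_infinity} together with Proposition~\ref{prop:baby_lee}. First I would record the two elementary observations needed to line up hypotheses: since $(M^n,g,v^m\dvol)$ is quasi-Einstein with quasi-Einstein constant $\lambda=-\frac{1}{m+n-1}$, one has $\Ric_\phi^m=-\frac{1}{m+n-1}g$, which in particular satisfies the bound $\Ric_\phi^m\geq-\frac{1}{m+n-1}g$ required below; and since by Definition~\ref{defn:ccqe} we have $M=\overline{M}\setminus\Sigma$ for a compact manifold with nonempty boundary, $(M^n,g,v^m\dvol)$ is complete and noncompact. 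Note that $m<\infty$ throughout (conformally compact quasi-Einstein smooth metric measure spaces are only defined for $m\in[0,\infty)$), so the auxiliary hypothesis $\lv\nabla\phi\rv^2\leq1$ never enters.

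Next, apply Proposition~\ref{prop:hypersurfaces_to_infinity}, whose hypotheses---quasi-Einstein constant $-\frac{1}{m+n-1}$, nonnegative first eigenvalue of the weighted conformal Laplacian of the conformal boundary, and (when $m>0$) nonnegative characteristic constant---are precisely those assumed in the present theorem. This produces a sequence $\{\Sigma_k\}$ of compact hypersurfaces, each separating $M$, with $d(o,\Sigma_k)\to\infty$ and with $h_k=\min_{x\in\Sigma_k}\{H_\phi(x),1\}$ obeying $\lim_{k\to\infty}(1-h_k)e^{\frac{2d(o,\Sigma_k)}{m+n-1}}=0$. These are exactly the hypotheses of Proposition~\ref{prop:baby_lee}, so that proposition yields $\lambda_1(-\Delta_\phi)=\tfrac14$, completing the proof. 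If one wishes to be fully explicit about the equality rather than only the lower bound $\lambda_1(-\Delta_\phi)\geq\tfrac14$ obtained from the Busemann-function argument, one combines that bound with the upper bound $\lambda_1(-\Delta_\phi)\leq\tfrac14$ furnished by Theorem~\ref{thm:weighted_cheng} applied to the complete noncompact space $(M^n,g,v^m\dvol)$ with $\Ric_\phi^m\geq-\frac{1}{m+n-1}g$.

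I do not expect a genuine obstacle here: all the real work has been done in Proposition~\ref{prop:hypersurfaces_to_infinity} (the weighted Gauss-equation estimate producing the hypersurfaces tending to infinity with weighted mean curvature tending to $1$ at the required rate) and in Proposition~\ref{prop:baby_lee} (the maximum-principle argument applied to $fe^{\beta/2}$ on a compact exhaustion of $M$). The only point to verify is that the hypotheses match verbatim, which they do; the single substantive remark is that the quasi-Einstein normalization $\lambda=-\frac{1}{m+n-1}$ is exactly what feeds the curvature bound required by both propositions.

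An alternative, should one prefer not to invoke Proposition~\ref{prop:baby_lee} as a black box, would be to repeat its short argument directly: take the Busemann function $\beta$ associated to the sequence $\{\Sigma_k\}$ from Proposition~\ref{prop:hypersurfaces_to_infinity}, which by the proof of Theorem~\ref{thm:splitting} satisfies $\Delta_\phi\beta\geq1$ and $\lv\nabla\beta\rv^2\leq1$ in the generalized support sense, use $h=fe^{\beta/2}$ on a Dirichlet exhaustion to get $\lambda_1(-\Delta_\phi)\geq\tfrac14$, and conclude equality via Theorem~\ref{thm:weighted_cheng}.
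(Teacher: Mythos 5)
Your proof is correct and matches the paper's exactly: Theorem~\ref{thm:ccqe_to_spectrum} is deduced immediately from Proposition~\ref{prop:hypersurfaces_to_infinity} combined with Proposition~\ref{prop:baby_lee}. Your additional remark that the lower bound $\lambda_1(-\Delta_\phi)\geq\frac14$ from the Busemann argument must be paired with the upper bound from Theorem~\ref{thm:weighted_cheng} to get equality is a fair reading of what is implicit in the paper's Proposition~\ref{prop:baby_lee}.
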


\begin{proof}

This follows immediately from Proposition~\ref{prop:hypersurfaces_to_infinity} and Proposition~\ref{prop:baby_lee}.
\end{proof}

\bibliographystyle{abbrv}
\bibliography{../bib}
\end{document}